\theoremstyle{plain}  
\newtheorem{thm}{Theorem}[section]
\newtheorem{cor}[thm]{Corollary}
\newtheorem{lem}[thm]{Lemma}
\newtheorem{prop}[thm]{Proposition}
\theoremstyle{definition}
\newtheorem{df}[thm]{Definition}
\newtheorem{ex}[thm]{Example}
\newtheorem{nt}[thm]{Notations}
\newtheorem{rem}[thm]{Remark}
\newtheorem{rev}[thm]{Review}
\theoremstyle{remark}
\DeclareMathOperator{\id}{id}
\DeclareMathOperator{\isoto}{\overset{\scriptstyle{\sim}}{\to}}
\DeclareMathOperator{\rdef}{\twoheadrightarrow}
\DeclareMathOperator{\rinc}{\hookrightarrow}
\newcommand{\ssm}{\smallsetminus}
\renewcommand{\Im}{\operatorname{Im}}
\newcommand{\coker}{\operatorname{Coker}}
\newcommand{\Tot}{\operatorname{Tot}}
\DeclareMathOperator{\Codim}{Codim}
\newcommand{\depth}{\operatorname{depth}}
\newcommand{\grade}{\operatorname{grade}}
\newcommand{\hight}{\operatorname{ht}}
\newcommand{\kos}{\operatorname{Kos}}
\DeclareMathOperator{\pd}{Projdim}
\newcommand{\rank}{\operatorname{rank}}
\DeclareMathOperator{\Spec}{Spec}
\DeclareMathOperator{\Supp}{Supp}
\newcommand{\Sym}{\operatorname{Sym}}
\newcommand{\END}{\operatorname{\mathcal{END}}}
\DeclareMathOperator{\HOM}{\mathcal{HOM}}
\newcommand{\Ar}{\operatorname{Ar}}
\newcommand{\dom}{\operatorname{dom}}
\newcommand{\ran}{\operatorname{ran}}
\newcommand{\op}{\operatorname{op}}
\DeclareMathOperator{\i1in}{i_1,\cdots,\underset{j}{1},\cdot,i_n}
\DeclareMathOperator{\dd}{\mathfrak{d}}
\DeclareMathOperator{\ee}{\mathfrak{e}}
\DeclareMathOperator{\ff}{\mathfrak{f}}
\DeclareMathOperator{\ii}{\mathfrak{i}}
\DeclareMathOperator{\mm}{\mathfrak{m}}
\DeclareMathOperator{\pp}{\mathfrak{p}}
\DeclareMathOperator{\vv}{\mathfrak{v}}
\DeclareMathOperator{\xx}{\mathfrak{x}}
\newcommand{\cA}{\mathcal{A}}
\newcommand{\cC}{\mathcal{C}}
\newcommand{\cM}{\mathcal{M}}
\newcommand{\cP}{\mathcal{P}}
\newcommand{\phib}{\phi_{\bullet}}
\newcommand{\xb}{x_{\bullet}}
\newcommand{\yb}{y_{\bullet}}
\newcommand{\AddCat}{\operatorname{\bf AddCat}}
\newcommand{\Set}{\operatorname{\bf Set}}
\newcommand{\Cat}{\operatorname{\bf Cat}}
\newcommand{\Cub}{\operatorname{\bf Cub}}
\newcommand{\GKos}{\operatorname{\bf GKos}}
\DeclareMathOperator{\Homo}{H}
\newcommand{\onto}[1]{\stackrel{#1}{\to}}
\newcommand{\Ch}{\operatorname{\bf Ch}}
\newcommand{\cMA}{\cM_A}
\newcommand{\cMAI}{\cM_A^I}
\newcommand{\Kos}{\operatorname{\bf Kos}}
\newcommand{\unit}{\operatorname{unit}}
\newcommand{\Wt}{\operatorname{\bf Wt}}
\newcommand{\WtAI}{\Wt_A^I}
\newcommand{\Z}{\mathbb{Z}}
\renewcommand{\coprod}{\sqcup}
\def\sn{\smallskip\noindent}
\title{Generalized Koszul resolutions}
\author{Satoshi Mochizuki and Akiyoshi Sannai}
\date{}
\begin{document}
\maketitle
\begin{abstract}
The main objective of this paper is 
to generalize a notion of Koszul resolutions and 
charcterizing modules which admits such a resolution. 
We turn out that for a noetherian ring $A$ and a coherent $A$ module $M$, 
$M$ has a two dimensional generalized Koszul resolution 
if and only if $M$ is a pure weight two module in the sense of \cite{HM09}. 
\end{abstract}

\section{Introduction}

The main point of this paper is to 
deal with new theory of complexes and resolutions. 
More precisely, 
the theory of generalized Koszul resolutions 
and related this notion with pure weight modules 
defined in \cite{HM09} and Koszul cubes defined in \cite{Kos}. 
To state the main theorem now we define the notion 
of generalized Koszul resolution and 
Koszul cubes. 

\begin{df}[\bf Generalized Koszul resolutions]
A $n$-cube $x$ in a category $\cC$ is a contravariant functor 
from ${[1]}^{\times n}$ to $\cC$ where $[1]$ is a totally ordered set 
$\{0,1\}$ with the natural order $0 <1$. 
For each $\ii \in {[1]}^{\times n}$, 
we call $x(\ii)$ a {\it vertex of $x$}. 
For each $\ii=(i_1,\cdots,i_n)$, 
we write $x(\ii-\ee_k \to \ii)$ by $d_{\ii}^k$ 
where $\ee_k$ is the $k$-th unit vector and 
we assume that $i_k=1$. 

\sn
Let $f_1,\cdots,f_n$ be a regular sequence in $A$.\\ 
A {\it Koszul cube} associated with $f_1,\cdots,f_n$ is  
a $n$-cube $x$ in the category of finite $A$-modules satisfying the following conditions:

\sn
(1) each vertex of $x$ is a finite free $A$-modules and 
their rank is constant. 
Therefore we can consider the determinant of its boundary maps. 

\sn 
(2) there are positive integers $m_1,\cdots,m_n$ and 
$\det d_{\ii}^k=f_k^{m_s}$ for each 
$\ii=(i_1,\cdots,i_n)\in{[1]}^{\times n}$ 
and $1\leqq k \leqq n$ such that $i_k=1$. 

\sn
A {\it generalized Koszul resolution} associated with 
$f_1,\cdots,f_n$ is the totalized complex of a Koszul cubes 
associated with $f_1,\cdots,f_n$. 
\end{df}

Now we state the main theorems:

\begin{thm}[\bf Theorem~\ref{thm:main 1}]
For any finite $A$-module $M$ and a regular sequence $f$, $g$, 
the following conditions are equivalent:

\sn
{\rm (1)} $M$ is pure weight two modules supported on $V(f,g)$ 
in the sense of {\rm \cite{HM09}}. 
Namely, $M$ is supported on $V(f,g)$ and of projective dimension less than two. 

\sn
{\rm (2)} $M$ is resolved by a generalized Koszul resolution 
associated with $f$, $g$. 

\sn
{\rm (3)} There is a Koszul cube $x$ associated with $f$, $g$ 
such that $\Homo_0(\Tot x)$ is isomorphic to $M$.
\end{thm}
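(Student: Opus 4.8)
The plan is to prove $(1)\Rightarrow(2)$, $(2)\Leftrightarrow(3)$ and $(2)\Rightarrow(1)$, everything resting on a key lemma: for \emph{any} Koszul $2$-cube $x$ associated with $f,g$, the complex $\Tot x\colon 0\to x(1,1)\xrightarrow{\phi_2}x(1,0)\oplus x(0,1)\xrightarrow{\phi_1}x(0,0)\to 0$ is acyclic in positive degrees, hence a finite free resolution of $\Homo_0(\Tot x)$. I would prove this with the Buchsbaum--Eisenbud acyclicity criterion. With respect to the displayed direct-sum decompositions, each of $\phi_1,\phi_2$ splits into $r\times r$ blocks ($r$ being the common rank of the four vertices) that are, up to sign, the boundary maps $d^k$ of $x$; since $\det d^k$ is a nonzero power of $f$ or of $g$ (nonzero because a power of a nonzerodivisor is a nonzerodivisor), the rank conditions $\rank\phi_2=\rank\phi_1=r$ hold, and the determinantal ideal $I_r(\phi_2)$ contains a power of $f$ together with a power of $g$, hence contains an ideal generated by a length-two regular sequence (powers of a regular sequence again form a regular sequence), so $\grade I_r(\phi_2)\geq 2$; likewise $\grade I_r(\phi_1)\geq 2\geq 1$. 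These are exactly the hypotheses of the criterion.

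Granting the lemma, $(2)\Leftrightarrow(3)$ is formal: a generalized Koszul resolution of $M$ is a complex $\Tot x$ quasi-isomorphic to $M$, so $\Homo_0(\Tot x)\cong M$; conversely, if $\Homo_0(\Tot x)\cong M$ then by the lemma $\Tot x$ is already a resolution of $\Homo_0(\Tot x)\cong M$. For $(2)\Rightarrow(1)$: such a $\Tot x$ is a free resolution of $M$ of length at most two, so $\pd_A M\leq 2$; and if $f\notin\pp$ then $\det d^1_{(1,0)}$ is a unit in $A_\pp$, so $(d^1_{(1,0)})_\pp$ is invertible, $(\phi_1)_\pp$ is surjective, and $M_\pp=\coker(\phi_1)_\pp=0$, whence $\Supp M\subseteq V(f)$; symmetrically $\Supp M\subseteq V(g)$, so $\Supp M\subseteq V(f,g)$.

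The real content is $(1)\Rightarrow(3)$: from $\pd_A M\leq 2$ and $\Supp M\subseteq V(f,g)$ --- which, $M$ being finite, forces $f^aM=g^bM=0$ for some $a,b\geq 1$ --- one must build a Koszul $2$-cube $x$ with $\Homo_0(\Tot x)\cong M$. I would begin with a finite free resolution $0\to F_2\xrightarrow{d_2}F_1\xrightarrow{d_1}F_0\to M\to 0$, normalized (by adding split acyclic summands $[A^{c}\xrightarrow{\id}A^{c}]$ in homological degrees $(1,0)$ and $(2,1)$, possible because $\rank F_2-\rank F_1+\rank F_0=\rank M=0$) so that its three terms acquire ranks $r,2r,r$ for a suitable $r$; then assemble the commuting square by interleaving $d_1,d_2$ with null-homotopies of $f^a\cdot\id_{F_\bullet}$ and of $g^b\cdot\id_{F_\bullet}$, in bases chosen so that the horizontal maps have determinant a power of $f$, the vertical maps a power of $g$, and $\Tot x\cong F_\bullet$. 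A more structural alternative is to prove that the class of modules admitting a Koszul-cube resolution is closed under extensions: given cubes $x',x''$ resolving $M',M''$, an extension class in $\operatorname{Ext}^1_A(M'',M')=\Hom_{\mathrm{D}(A)}(\Tot x'',(\Tot x')[1])$ is representable by an honest chain map (as $\Tot x''$ is a bounded complex of projectives), and its mapping cone is the totalization of the cube with vertices $x'(\ii)\oplus x''(\ii)$ and block upper-triangular boundaries, whose determinants are products and hence still powers of $f$ or $g$; one then reduces a general pure weight two module to simpler pieces by a filtration argument, the base case being handled by the one-variable matrix-factorization picture, as in \cite{Kos}. Either way, the main obstacle is to make the determinant conditions hold \emph{simultaneously} with $\Tot x\simeq M$: this forces a delicate choice of the rank $r$, of the exponents $m_k$, and of the gluing data, and is where the detailed analysis of Koszul cubes from \cite{Kos} is likely to be needed.
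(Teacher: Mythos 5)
Your handling of the easy implications is correct and coincides with what the paper does: the acyclicity of $\Tot x$ for an arbitrary Koszul $2$-cube is exactly Theorem~\ref{thm:resolcriterion}, proved there by the same Buchsbaum--Eisenbud computation you outline (the relevant determinantal ideals contain $f^{m_1}$ and $g^{m_2}$, which still form a regular sequence), and your localization argument for $(2)\Rightarrow(1)$ and the formal equivalence $(2)\Leftrightarrow(3)$ are fine. The genuine gap is in $(1)\Rightarrow(3)$, which is the entire content of the theorem and which you yourself flag as ``the main obstacle'' without resolving: neither of your two sketches actually produces a commuting square of free modules whose horizontal boundary maps have determinant a power of $f$, whose vertical ones have determinant a power of $g$, and whose totalization resolves $M$. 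Your first sketch (interleaving a length-two free resolution with null-homotopies of $f^a\cdot\id$ and $g^b\cdot\id$) gives no control whatsoever on the determinants of the individual block maps, and your second sketch (closure under extensions plus a filtration) presupposes both a base case and the existence of a filtration of an arbitrary pure weight two module by pieces already known to be resolvable; deferring all of this to ``a delicate choice of the gluing data'' and to \cite{Kos} leaves the theorem unproved.

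The paper closes this gap by a concrete construction different from both of your sketches. After reducing to $fM=gM=0$ (allowed since replacing $f,g$ by powers only changes the exponents $m_s$), choose a surjection $(A/f)^{\oplus n}\rdef M$ with kernel $L$; then $L\in\Wt_A^{(f)}$, so it is presented by a single square matrix $P:A^{\oplus m}\to A^{\oplus m}$, and a two-step localization argument (at $f$, then at $g$, using a minimal resolution over $A_g$ and the equality $A_f^{\times}\cap A_g^{\times}=A^{\times}$ for the regular sequence $f,g$) shows $\det P=\mathrm{unit}\times f^{n}$. The decisive step you are missing is the next one: lifting multiplication by $g$ on $(A/f)^{\oplus n}$ through $A^{\oplus m}\rdef L\rinc (A/f)^{\oplus n}$ produces matrices $X$ and $V$ with $UX=gE_n+fV$ (where $U:A^{\oplus m}\to A^{\oplus n}$ covers $L\rinc (A/f)^{\oplus n}$), and then the block matrix
$$\bar U=\begin{pmatrix} fV & U\\ X& E_m\end{pmatrix}$$
satisfies $\det\bar U=(-g)^n$, because elementary row and column operations reduce it to $\mathrm{diag}(-gE_n,E_m)$ precisely thanks to the identity $UX=gE_n+fV$. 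The Koszul cube is then the square with vertical maps $\mathrm{diag}(E_n,P)$ and $\mathrm{diag}(fE_n,E_m)$ and horizontal maps $\bar U$ and its induced lift; its cokernel in degree zero is $M$ by construction. It is this explicit determinant identity, not a general homotopy or filtration argument, that makes the determinant conditions and the resolution property hold simultaneously.
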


\sn
{\it Conventions.} Throughout of this paper, 
we use the letter $A$ 
to denote a commutative noethrian ring with a unit. 
We denote the category of finite $A$-modules by $\cM_A$. 

\medskip\sn
{\it Acknowledgements.} 
The second author is grateful 
for Kei-ichi Watanabe and Takafumi Shibuta 
for stimulating discussion. 

\section{Definition of weight}

In this section, 
we start from reviewing a notion 
of pure weight perfect modules 
over noetherian rings. 
For more information 
of pure weight perfect modules 
over any schemes, 
see \cite{HM09}. 
Mainly we intend to study 
fundamental properties 
of pure weight modules 
over a Cohen-Macaulay local ring. 

\begin{df}
For an ideal $I$ of $A$ generated 
by a regular sequence of $A$ 
and $0\leqq r \leqq \infty$, 
let us denote the category 
of finitely generated $A$-modules 
of projective dimension $\leqq r$ 
supported on $V(I)$ 
by $\cMAI(r)$. 
One can easily check 
that $\cMAI(r)$ 
is closed under the extensions 
and direct summand in $\cMA$. 
Therefore $\cMAI(r)$ has 
a natural exact categorical structure. 
If $I$ is generated by a regular sequence 
consisting of $r$ elements, 
we denote $\cMAI(r)$ by $\WtAI$ 
and an $A$-module in $\WtAI$ 
is said to be {\it pure weight} $r$ $A$-module 
({\it supported on $I$}).
\end{df}

\begin{rem}
The definition above is compatible with 
that in \cite{HM09}. 
To prove this we shall notice 
that the notion of torsion and projective dimension 
are equivalent 
for a finitely generated modules 
on a noetherian ring 
(see \cite[Proposition 4.1.5]{Wei94}) 
and that in the notation above, 
$\Spec A/I \rinc \Spec A$
is a regular closed immersion. 
\end{rem}

\begin{ex}

\sn
Let $f_1,\cdots,f_p$ be a regular sequence of $A$, 
then $A/(f_1,\ldots,f_p)$ is a typical example 
of a module of weight $p$. 
We will call such a pure weight module  
a {\it{simple pure weight module}}.

\sn
A module of weight $0$ is a projective module 
whose support is total space $\Spec A$.

\sn
If $A$ is a Cohen-Macaulay local ring of Krull dimension $d$, 
a module of weight $d$ is just a module 
of finite projective dimension and finite length.

\sn
If $A$ is not a Cohen-Macaulay ring, 
the class of $A$-modules of finite length 
and finite projective dimension 
does not work fine as in 
the following example in \cite{Ger74}. 
Let $(A,\mm)$ be a $2$-dimensional local ring 
which is not normal and $\Spec A \ssm \{\mm\}$ is regular. 
Then an $A$-Module of finite length 
and finite projective dimension is the zero $A$-module.

\begin{proof}[\bf Rough sketch of proof]
Let $M$ be a non-zero, $A$-module of finite length 
and finite projective dimension. 
Since $A$ is not Cohen-Macaulay, 
we have inequalities  
$$\pd_A M \leqq \pd_A M +\depth_A M =\depth A < 2.$$
Let 
$$A^{\oplus n} \onto{\phi} A^{\oplus m} \to M \to 0$$
be a projective (= free) resolution of $M$. 
Then we can easily notice 
that $n=m$ and 
let us put 
$f=\det \phi$ and 
let $\pp$ be one of minimal prime of the ideal $(f)$. 
Then by Krull's theorem, 
we have $\hight \pp=1$ and 
we can easily notice that $M_{\pp} \neq 0$. 
But by hypothesis we have $\Supp M=\{\mm\}$. 
This is contradiction.
\end{proof}

In this section, 
from now on, 
we assume that 
$A$ is a Cohen-Macaulay ring. 

\begin{nt}
Let us denote the category of weight $p$ $A$-modules 
by $\Wt^p_A$. 
Since $\Wt^p_A$ is closed under extensions in $\cM_A$, 
$\Wt^p_A$ can be naturally considered as an exact category. 
\end{nt}

In this section from now on, we moreover assume that 
$A$ is local ring 
of Krull dimension $d$

\begin{prop}
Let $M$ be a non-zero, 
pure weight $p$ $A$-module. 
Then $M$ is a Cohen-Macaulay module of dimension $d-p$. 
\end{prop}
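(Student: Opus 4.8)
The plan is to compute $\depth_A M$ and $\dim M$ separately and show both equal $d-p$. For the projective dimension side, since $M$ is pure weight $p$, it has finite projective dimension, so the Auslander--Buchsbaum formula gives $\pd_A M + \depth_A M = \depth A = d$ (the last equality because $A$ is Cohen--Macaulay local). Thus it suffices to prove two things: first, that $\pd_A M = p$, and second, that $\dim_A M = d - \pd_A M$ (which then forces $\depth_A M = \dim_A M$, i.e.\ $M$ is Cohen--Macaulay, of the asserted dimension).

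First I would handle the projective dimension. By definition $\pd_A M \le p$. For the reverse inequality, write $I=(f_1,\dots,f_p)$ with $f_1,\dots,f_p$ the regular sequence cutting out the support of $M$; then $\hight I = p$ since $A$ is Cohen--Macaulay. Pick a minimal prime $\pp$ of $I$ with $\dim A/\pp = d - p$ (exists by Cohen--Macaulayness / catenarity), and localize. Since $M$ is supported on $V(I)$ and nonzero, $M_\pp \ne 0$ after possibly replacing $\pp$ by a minimal prime of $\Supp M$ of maximal dimension — here I must be slightly careful and choose $\pp$ to be a minimal prime of $\Supp M$, noting $\Supp M \subseteq V(I)$ so such a $\pp$ has height $\le p$. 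Then $M_\pp$ is a nonzero finite-length $A_\pp$-module (finite length because $\pp$ is minimal in the support) of finite projective dimension, so by Auslander--Buchsbaum over $A_\pp$ we get $\pd_{A_\pp} M_\pp = \depth A_\pp = \hight \pp \le p$. Running the argument to pin down the dimension: the key input is that $\depth_A M > 0$ would contradict... — actually the cleanest route is the classical one: for a module of finite projective dimension over a Cohen--Macaulay local ring, $\dim_A M \le d - \pd_A M$ always, while the new weight hypothesis forces $\Supp M = V(I)$ in the relevant irreducible components so that $\dim_A M = d - p \ge d - \pd_A M \ge \dim_A M$, collapsing all inequalities and giving $\pd_A M = p$ and $\dim_A M = \depth_A M = d-p$ simultaneously.

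So the steps, in order, are: (i) reduce to showing $\pd_A M = p$ via Auslander--Buchsbaum; (ii) establish $\dim_A M \le d - \pd_A M$ for any finite $A$-module of finite projective dimension over the Cohen--Macaulay local ring $A$ (this is standard, e.g.\ via the intersection theorem or via the behavior of $\Ext$, but can also be bootstrapped from the simple pure weight case $A/(f_1,\dots,f_p)$ treated in the Example together with the fact that $\Wt_A^p$ is closed under extensions and summands); (iii) show $\dim_A M \ge d-p$, which holds because $M\ne 0$ is supported on $V(I)$ and — being a faithful-enough module over $A/I$-ish — must have some associated prime of coheight $\ge d-p$; more carefully, use that $\grade(I,A)=p=\hight I$ and $M$ is nonzero with $\Supp M\subseteq V(I)$, so $\dim M \le d-p$, while $\depth_A M \le \dim_A M$ forces, together with (i)+(ii) giving $\depth_A M = d-p$, the equality $\dim_A M = d-p$.

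The main obstacle I anticipate is step (iii): showing the dimension does not drop below $d-p$, i.e.\ that $M$ genuinely sees the full codimension-$p$ locus rather than a smaller piece of it. The resolution is that $\pd_A M = p$ is forced the moment $M \ne 0$: if $\pd_A M = q < p$ then $\depth_A M = d-q > d-p$, but $M$ is annihilated by a power of $I$ and $\depth_A M \le \dim A/I = d-p$ (since $I$ has a nonzero divisor... no — since any module killed by $I^N$ has support in $V(I)$ hence dimension $\le \dim A/I = d-p$, and depth $\le$ dimension), a contradiction. Thus $\pd_A M = p$, hence $\depth_A M = d-p$, and combined with $\dim_A M \le d-p$ and $\depth_A M \le \dim_A M$ we conclude $\dim_A M = \depth_A M = d-p$, so $M$ is Cohen--Macaulay of dimension $d-p$. $\square$
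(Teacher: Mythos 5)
Your final paragraph is correct and uses essentially the same argument as the paper: Auslander--Buchsbaum gives $\depth_A M \geqq d-p$ from $\pd_A M \leqq p$, the support condition $\Supp M \subseteq V(I)$ gives $\dim_A M \leqq d-p$, and $\depth_A M \leqq \dim_A M$ collapses the chain $d-p \leqq \depth_A M \leqq \dim_A M \leqq d-p$. The meandering middle section (localizing at minimal primes, etc.) is unnecessary and could be cut, but the closing argument is complete and even yields the slightly stronger fact $\pd_A M = p$ for $M \neq 0$.
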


\begin{proof}[\bf Proof]
We have two equalities:
\begin{equation}
\dim_A M + \Codim_A M= d
\end{equation}
\begin{equation}
\pd_A M + \depth_A M= d
\end{equation}
Therefore we have 
$$d-p \leqq \depth_A M \leqq \dim_A M \leqq d-p.$$
Hence we get $\depth_A M = \dim_A M =d-p$. 
\end{proof}

\end{ex}

\begin{cor}
For a non-zero pure weight module $M$, 
its associated prime ideal is minimal.
\end{cor}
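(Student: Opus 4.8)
The plan is to pin down, for each $\pp\in\Ass_A M$, both $\hight\pp$ and its relation to the defining ideal, and then read off minimality. Here ``minimal'' is to be read as minimal in $\Supp_A M$ --- equivalently, the claim is that $M$ has no embedded primes. Write $M$ for a nonzero pure weight $p$ module: it is supported on $V(I)$ with $I$ generated by a regular sequence $f_1,\dots,f_p$, and $\pd_A M\le p$. I will show that every $\pp\in\Ass_A M$ contains $I$ and has $\hight\pp=p$; since every prime containing $I$ has height $\ge\hight I$ and $\Supp_A M\subseteq V(I)$, this immediately gives minimality.

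First I would record that $\hight I=p$: the regular sequence $f_1,\dots,f_p$ shows $\grade I\ge p$, Krull's height theorem gives $\hight I\le p$, and $\grade I=\hight I$ because $A$ is Cohen--Macaulay (in particular $I\ne A$, as $V(I)\supseteq\Supp_A M\ne\emptyset$). Now fix $\pp\in\Ass_A M$. Since $\Supp_A M\subseteq V(I)$ we have $I\subseteq\pp$, hence $\hight\pp\ge\hight I=p$. For the reverse inequality, localize at $\pp$: then $\pp A_\pp\in\Ass_{A_\pp}M_\pp$, so $\depth_{A_\pp}M_\pp=0$, while $M_\pp\ne 0$ has $\pd_{A_\pp}M_\pp\le\pd_A M\le p$. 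Applying the Auslander--Buchsbaum formula over the Cohen--Macaulay local ring $A_\pp$,
\[
\pd_{A_\pp}M_\pp=\depth A_\pp-\depth_{A_\pp}M_\pp=\depth A_\pp=\hight\pp,
\]
so $\hight\pp\le p$, and therefore $\hight\pp=p$.

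To conclude: if some $\qq\in\Supp_A M$ had $\qq\subsetneq\pp$, then $\qq\supseteq I$ would give $\hight\qq\ge p$, while the strict inclusion forces $\hight\qq<\hight\pp=p$ --- a contradiction. Hence $\pp$ is minimal in $\Supp_A M$. I do not anticipate a real obstacle: the engine is just Auslander--Buchsbaum after localization together with the identity $\grade=\hight$ for proper ideals of a Cohen--Macaulay ring. The points to keep in view are the standing hypotheses --- $A$ Cohen--Macaulay local of dimension $d$, so that $A_\pp$ is again Cohen--Macaulay and these identities apply --- and the fact that projective dimension does not increase under localization. Alternatively one can route the argument through the preceding Proposition, which already yields $\dim_A M=d-p$, hence that the minimal primes of $\Supp_A M$ have coheight $d-p$, i.e.\ height $p$ (a Cohen--Macaulay local ring being catenary and equidimensional); combined with $\hight\pp\le p$ this again identifies $\pp$ as one of those minimal primes.
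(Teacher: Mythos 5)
Your argument is correct, but it takes a different route from the paper's. The paper disposes of the corollary in one line by citing the immediately preceding Proposition --- a non-zero pure weight $p$ module is Cohen--Macaulay of dimension $d-p$ --- together with the general fact that a Cohen--Macaulay module has no embedded primes (every $\pp\in\Ass_A M$ satisfies $\dim A/\pp=\dim_A M$, so all associated primes are minimal in the support). You instead recompute everything locally: for each $\pp\in\Ass_A M$ you use $\depth_{A_\pp}M_\pp=0$, the inequality $\pd_{A_\pp}M_\pp\leqq\pd_A M\leqq p$, and Auslander--Buchsbaum over the Cohen--Macaulay local ring $A_\pp$ to get $\hight\pp\leqq p$, while $I\subseteq\pp$ and $\grade I=\hight I=p$ give the reverse inequality. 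This is a localized instance of the same engine the paper uses globally in the Proposition (there the formula $\pd_A M+\depth_A M=d$ is applied at the maximal ideal), and it buys you slightly more: you identify every associated prime as a minimal prime of $I$ itself, of height exactly $p$, rather than merely as minimal in $\Supp_A M$. Your closing remark correctly identifies the paper's shorter path via the Proposition. All the individual steps check out --- in particular the use of $\grade=\hight$ for proper ideals of a Cohen--Macaulay ring, the non-increase of projective dimension under localization, and the strict height drop along a proper inclusion of primes --- so there is no gap; just be aware that the intended proof is the two-line appeal to unmixedness of Cohen--Macaulay modules.
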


\begin{proof}[\bf Proof]
It is a general property of Cohen-Macaulay modules. 
\end{proof}

\section{Generality of cubes}

In this section, we fix a general notion of cubes. 
From now on, let $\cC$ be a category. 

\begin{nt}
For a set $S$, we denote its power set by $\cP(S)$. 
That is, $\cP(S)$ is the set of all subset of $S$. 
By the natural inclusion order, 
we can consider $\cP(S)$ as an order sets, a fortiori, a category. 
For a map $f:S \to T$ between sets, 
we define its {\it push forward} which is an order preserving map as follows.
$$f_{\ast}:\cP(S)\ni x \to f(x) \in \cP(T).$$
Obviously the association above is functorial, 
we get the functor 
$$\cP:\Set \to \Cat$$
where $\Set$, $\Cat$ are the category of sets, 
the category of categories respectively.
\end{nt}

\begin{ex}
\label{ex:correspondences}
For a positive integer $n$, 
we denote the set of positive integers $k$ 
such that $1\leqq k \leqq n$ by
$(n]$ and for a non-negative integer $m$, 
we denote the totally order set of integers $k$ 
such that $0\leqq k \leqq n$ with the natural order by $[n]$. 
We put $\cP_n=\cP((n])$. 
Then we have the canonical category isomorphism 
$$\cP_n \isoto [1]^{\times n}$$
$$S \mapsto (\chi_S(1),\cdots,\chi_S(n))$$
where $\chi_S$ is the {\it characteristic function} 
associated with $S$, 
that is, the function $\chi_S:(n] \to [1]$ defined by 
$$ \chi_S(k)=
\begin{cases}
1 & \text{if $k \in S$}\\
0 & \text{otherwise}
\end{cases}
.$$
\end{ex}

\begin{ex}
\label{ex:coprodisom}
For a pair of disjoint sets $S$, $T$, 
we have the canonical category equivalence:
$$\cP(S)\times\cP(T)\ni(A,B) \mapsto A\cup B \in \cP(S\coprod T).$$
\end{ex}

\begin{df}
For a set $S$, 
a {\it $S$-cube} in $\cC$ is a contravariant functor from 
$\cP(S)$ to $\cC$. 
We denote the category of $S$-cubes in $\cC$ by
$$\Cub^S(\cC):=\HOM({\cP(S)}^{\op},\cC).$$
Since the association $\HOM({\cP(S)}^{\op},-)$ is covariant, 
we get the functors 
$$\Cub^S:\Cat \to \Cat$$
$$\Cub:\Set \ni S \mapsto \Cub^S \in \END(\Cat):=\HOM(\Cat,\Cat).$$
\end{df}

\begin{nt}
Let $S$ be a set and $x$ be a $S$-cube in $\cC$. 
For $T \in \cP(S)$ and $k\in T$, 
we denote $x(S)$ by $x_S$ and call 
it a {\it vertex of $x$}. 
we also write $x(T\ssm\{k\} \rinc T)$ by 
$d_T^{x,k}$ or shortly $d_T^k$ and call 
it a {\it boundary morphism of $x$}.
\end{nt}

The following lemma is sometimes useful to treat morphisms of cubes.

\begin{lem} 
\label{lem:genofcubemap} 
Let $S$ be a set.

\sn
{\rm (1)} 
For any $S$-cube $x$, 
every $T$, $U\in\cP(S)$ such that 
$T \subset U$ and $U \ssm T$ is a finite set, 
$x(U \subset T)$ is described as composition of boundary morphisms.

\sn
{\rm (2)} 
Assume that $S$ is a finite set. 
For any $S$ cubes $x$, $y$ and a family of morphisms 
$f={\{f_T:x_T \to y_T\}}_{T\in\cP(S)}$ in $\cC$, 
$f:x \to y$ is a morphism of $S$-cubes in $\cC$ 
if and only if for any $T\in\cP(S)$ and $k\in T$, 
we have the equality $d_T^{y,k}f_T=f_{T\ssm\{k\}}d_T^{x,k}$. 
\end{lem}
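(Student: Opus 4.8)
The plan is to prove both parts by straightforward reduction to the defining data of a cube, using the fact that $\cP(S)$ is generated as a category by the one-step inclusions $T\ssm\{k\}\rinc T$. For part (1), since $U\ssm T$ is finite I would enumerate its elements $k_1,\dots,k_r$ and build the chain of subsets $T=T_0\subset T_1\subset\cdots\subset T_r=U$ with $T_j=T_{j-1}\cup\{k_j\}$, so that each inclusion $T_{j-1}\rinc T_j$ is a one-step inclusion. Functoriality of the contravariant functor $x$ then gives $x(U\subset T)=d_{T_1}^{x,k_1}\circ d_{T_2}^{x,k_2}\circ\cdots\circ d_{T_r}^{x,k_r}$, which is the asserted description as a composite of boundary morphisms. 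The only thing to check is that in $\cP(S)$ every morphism $U\supset T$ (equivalently, every inclusion of subsets) really does factor as such a chain, which is immediate because $\cP(S)$ is a poset and the chain above lies in it.

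For part (2), the ``only if'' direction is trivial: if $f$ is a natural transformation then naturality applied to the particular morphism $T\ssm\{k\}\rinc T$ of $\cP(S)$ is exactly the stated equality $d_T^{y,k}f_T=f_{T\ssm\{k\}}d_T^{x,k}$. For the ``if'' direction I must verify naturality of $f$ on an \emph{arbitrary} morphism of $\cP(S)^{\op}$, i.e.\ on an arbitrary inclusion $T\subset U$ of subsets of $S$. Here I use the finiteness of $S$ (so $U\ssm T$ is automatically finite) together with part (1): write $x(U\subset T)$ and $y(U\subset T)$ as composites $d_{T_1}^{x,k_1}\cdots d_{T_r}^{x,k_r}$ and $d_{T_1}^{y,k_1}\cdots d_{T_r}^{y,k_r}$ along a common chain $T=T_0\subset\cdots\subset T_r=U$, and then slide the maps $f_{T_j}$ through one boundary morphism at a time, each step being an instance of the hypothesis $d_{T_j}^{y,k_j}f_{T_j}=f_{T_{j-1}}d_{T_j}^{x,k_j}$. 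A telescoping induction on $r$ then yields $y(U\subset T)\circ f_U=f_T\circ x(U\subset T)$, which is precisely the naturality square for $T\subset U$.

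The argument is essentially formal, so I do not expect a serious obstacle; the one point deserving care is the bookkeeping in part (2), namely checking that the composite of naturality squares along the chain is independent of the chosen ordering $k_1,\dots,k_r$ of $U\ssm T$ and genuinely assembles into the single square for $T\subset U$. This is handled by the induction in the previous paragraph, and no compatibility between different chains needs to be proved separately because we only need the existence of \emph{one} valid factorization to establish the naturality identity. Finiteness of $S$ is used solely to guarantee that every such $U\ssm T$ is finite, so that part (1) applies to every morphism of $\cP(S)$; without it one would have to restrict to ``locally finite'' inclusions as in part (1).
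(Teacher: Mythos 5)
Your argument is correct, and it is exactly the routine factorization-through-one-step-inclusions argument one would expect; note that the paper states Lemma~\ref{lem:genofcubemap} without any proof at all, so there is nothing to compare against. Your observation that only the \emph{existence} of one chain is needed (since $x(U\subset T)$ is already well-defined by functoriality of $x$, so no independence-of-ordering check is required) is the one point where care is needed, and you handle it correctly.
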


\begin{ex} 
\label{ex:lowcardinalcube} 
Let $S$ be a set. 

\sn
{\rm (1)} 
If $S$ is the empty set $\emptyset$, 
then we have the canonical isomorphism of endfunctors on $\Cat$:
$$\Cub^{\emptyset}\isoto \id_{\Cat}.$$

\sn
{\rm (2)} 
We denote the morphism category of $\cC$ is 
the category whose objects are morphisms in $\cC$ 
and morphisms are commutative squares in $\cC$ 
by $\Ar\cC$. 
The assignment $\cC \mapsto \Ar\cC$ above is functorial, 
therefore we have the endofunctor: 
$$\Ar:\Cat \to \Cat.$$ 
We have the canonical two natural transformations 
$$\dom,\ \ran:\Ar \to \id_{\Cat}.$$
Namely, for any morphism $f:x\to y$ in $\cC$, 
we have $\dom(f)=x$ and $\ran(f)=y$. 

\sn
If $S$ is a singleton $\{s\}$, 
then we have the canonical isomorphism of endfunctors on $\Cat$:
$$\Cub^{\{s\}}\isoto\Ar.$$ 
\end{ex}

\begin{nt} 
For a positive integer $n$, 
$(n]$-cubes are simply said to be 
$n$-cubes and $\Cub^{(n]}$ is denoted by 
$\Cub^n$. 
By the isomorphism in \ref{ex:correspondences}, 
we sometimes consider $n$-cubes as 
contravariant functors on $[1]^{\times n}$. 
Then for a $n$-cube $x$ in $\cC$, 
we denote $x(i_1,\cdots,i_n)$ by $x_{i_1,\cdots,i_n}$ and 
$x((i_1,\cdots,\overset{j}{\hat{1}},\cdots,i_p) 
\to (i_1,\cdots,\overset{j}{\hat{0}},\cdots,i_p))$ 
by $d^{x,j}_{\i1in}$ or shortly $d^j_{\i1in}$.
\end{nt}

\begin{ex}
\label{ex:associativity}
For a pair of disjoint sets $S$, $T$, 
a $S$-cube in $T$-cube is canonically considered as a $S\coprod T$-cube. 
More precisely, utilizing the isomorphism in \ref{ex:coprodisom}, 
we have the canonical isomorphism between endfunctors on $\Cat$
$$\Cub^{S\coprod T}\isoto\Cub^S(\Cub^T).$$ 
\end{ex}

\begin{ex}
\label{ex:typicalkoscube}
Let $f_1,\cdots,f_n$ be elements in $A$. 
We define the $n$-cube $\kos(f_1,\cdots,f_n)$ 
(or shortly $\kos(\ff_1^n)$) in $\cP_A$ 
by 
$\kos(\ff_1^n)_S:=A \ \ \text{for any $S \in \cP_n$}$ and 
$d_S^j=f_j \ \ \text{for any $S \in \cP_n$ and $j \in S$}.$
\end{ex}

\begin{nt}
\label{nt:faceofcube}
For a set $S$ and an element $k \in S$, 
we have the two kind of canonical natural transformations 
$\delta_{S,\dom}^k$, $\delta_{S,\ran}^k:\Cub^S \to \Cub^{S\ssm\{k\}}$ defined by the composition
$$\Cub^S(\cC)\isoto\Cub^{\{k\}}(\Cub^{S\ssm\{k\}}(\cC))
\isoto \Ar(\Cub^{S\ssm\{k\}}(\cC))
\to \Cub^{S\ssm\{k\}}(\cC)$$
where the first morphism is defined in \ref{ex:associativity}, 
the second one is described in \ref{ex:lowcardinalcube} 
and the last one is $\dom$ or $\ran$ respectively. 
For a $S$-cube $x$ in $\cC$, 
$\delta_{S,\dom}^k(x)$, 
$\delta_{S,\ran}^k(x)$ 
are said to be the {\it domain side $k$-face}, 
{\it range side $k$-face} of $x$ respectively. 
\end{nt}

The following lemma is sometimes useful for treating cubes.

\begin{lem}[\bf Cube lemma]
\label{lem:cube lemma}
For the diagram in the category $\cC$:
$$\xymatrix{
a \ar[rrr] \ar[ddd] 
& & & b \ar[ddd]\\ 
& x \ar[r] \ar[d] \ar[ul] 
& y \ar[d] \ar[ur] 
& \\
& z \ar[dl] \ar[r] 
& w \ar[dr] \\
c \ar[rrr] & & &
d & , 
}$$
assume that the morphism $\vec{wd}$ is a monomorphism 
(resp. $\vec{xa}$ is an epimorphism) 
and every squares except $xywz$
(resp. $abdc$) 
are commutative. 
Then $xywz$ 
(resp. $abdc$) 
is also commutative.
\end{lem}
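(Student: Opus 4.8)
The plan is to prove both assertions by a single diagram chase through the five faces that are assumed to commute, exploiting the cancellation property of the distinguished edge. I spell out the first assertion; the second is obtained by interchanging the roles of the front and back squares.

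For the first assertion, commutativity of the front square $x\,y\,w\,z$ is the identity $\vec{yw}\circ\vec{xy}=\vec{zw}\circ\vec{xz}$, and since $\vec{wd}$ is a monomorphism it is enough to verify this identity after post-composition with $\vec{wd}$. I would start from $\vec{wd}\circ\vec{yw}\circ\vec{xy}$ and rewrite it in five steps, each using the commutativity of one face: the square on $b\,d\,w\,y$ gives $\vec{wd}\circ\vec{yw}=\vec{bd}\circ\vec{yb}$; the square on $a\,b\,y\,x$ gives $\vec{yb}\circ\vec{xy}=\vec{ab}\circ\vec{xa}$; the outer (back) square $a\,b\,d\,c$ gives $\vec{bd}\circ\vec{ab}=\vec{cd}\circ\vec{ac}$; the square on $a\,c\,z\,x$ gives $\vec{ac}\circ\vec{xa}=\vec{zc}\circ\vec{xz}$; and the square on $c\,d\,w\,z$ gives $\vec{cd}\circ\vec{zc}=\vec{wd}\circ\vec{zw}$. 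Chaining these five substitutions transforms $\vec{wd}\circ\vec{yw}\circ\vec{xy}$ into $\vec{wd}\circ\vec{zw}\circ\vec{xz}$, so cancelling the monomorphism $\vec{wd}$ yields commutativity of $x\,y\,w\,z$. I would present this as one annotated chain of equalities so the telescoping is visible at a glance.

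For the parenthetical statement, all faces except the back square $a\,b\,d\,c$ are assumed commutative and $\vec{xa}$ is an epimorphism; commutativity of $a\,b\,d\,c$ is the identity $\vec{bd}\circ\vec{ab}=\vec{cd}\circ\vec{ac}$, which by the epimorphism property it suffices to check after pre-composition with $\vec{xa}$. Running the same five face-relations — but now using the front square $x\,y\,w\,z$ in place of the back square — one transforms $\vec{bd}\circ\vec{ab}\circ\vec{xa}$ into $\vec{cd}\circ\vec{ac}\circ\vec{xa}$, and cancelling the epimorphism $\vec{xa}$ finishes the proof. I expect no conceptual obstacle; the only real care needed is the bookkeeping — keeping straight the orientation of all twelve edges and checking that the five substitutions genuinely telescope from one endpoint to the other.
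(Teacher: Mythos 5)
Your proof is correct: the five-step telescoping chase (post-composing with the monomorphism $\vec{wd}$, rewriting through the faces $bdwy$, $abyx$, $abdc$, $aczx$, $cdwz$, then cancelling; and dually pre-composing with the epimorphism $\vec{xa}$ for the parenthetical case) is exactly the standard argument. The paper itself states this lemma without any proof, treating it as a routine diagram chase, so your write-up simply supplies the omitted verification and there is no divergence of approach to report.
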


From now on, 
we assume that $\cC$ has a zero object.

\begin{df}[\bf Homology of cubes]
\label{df:homologycube}
Let $S$ be a set, $x$ a $S$-cube in $\cC$ 
and an element $k$ in $S$. \\
Assume that for any $T \in \cP(S\ssm\{k\})$, 
there exist a cokernel (resp. a kernel) of $d_{T\cup \{k\}}^k$, 
then the {\it $k$-direction $0$-th} 
(resp. {\it $1$-th}) {\it homology} of $x$ 
is a $S\ssm\{k\}$-cube in $\cC$ 
denoted by $\Homo^k_0(x)$ (resp. $\Homo^k_1(x)$) and defined by 
$\Homo_0^k(x)_T:=\coker d_{T\cup \{k\}}^k$ 
(resp. $\Homo_1^k(x)_T:=\ker d_{T\cup \{k\}}^k$).\\
There are the natural quotient and inclusion morphisms 
$\Homo_1^k(x) \rinc x_{T\cup\{k\}}$ and 
$x_T \rdef \Homo_0^k(x)$. 
The associations above are functorial, 
that is, 
if we assume that 
any morphism in $\cC$ admits its cokernel (resp. kernel), 
then we have the following functor 
$$\Homo_0^k \text{(resp. $\Homo_1^k$)}:\Cub^S(\cC) \to 
\Cub^{S\ssm\{k\}}(\cC)$$ 
and the natural transformations 
$$\Homo_1^k \onto{\iota_S^k} \delta_{S,\dom}^k \text{   and   }  
\delta_{S,\ran}^k \onto{\pi_S^k} \Homo_0^k.$$
\end{df}

\section{Notations for multi complexes}

Now we are fixing the notation of multi complexes in an additive category.
We assume that $\cA$ is an additive category. 

\begin{nt}
\label{nt:mapstoZ}
For a set $S$, 
let us denote the set of maps from $S$ to $\Z$ by $\Z^S$. 
It has the natural term-wise addition, that is, 
for any $f$, $g$ in $\Z^S$, we define $f+g$ by $(f+g)(s):=f(s)+g(s)$ 
for any $s$ in $S$. 
Let us recall that for any $T \in \cP(S)$, 
$\chi_T \in \Z^S$ is the characteristic function 
associated with $T$ 
(see \ref{ex:correspondences}). 
In particular for any $s\in S$, 
$\chi_{\{s\}}$ is denoted by $\delta^s$. 
\end{nt}

\begin{df}[\bf Multi complexes]
\label{df:multicomplex} 
For a set $S$, 
a {\it $S$-multi complexes} $\xb$ in $\cA$ is a family 
$\{x_f,d^{x,s}_f\}_{f\in\Z^S,s\in S}$ consisting of 
objects $x_f$ in $\cC$ 
and morphisms $d_f^{x,s}:x_f \to x_{f-\delta^s}$ 
satisfying the following equalities: 
$$
\begin{cases}
d^{x,s}_{f-\delta^s}d^{x,s}_f=0\\
d^{x,t}_{f-\delta^s}d^{x,s}_f+d^{x,s}_{f-\delta^t}d^{x,t}_f=0
\end{cases}
$$
for every $f\in\Z^S$ and $s\neq t$ in $S$.\\ 
For each $f\in \Z^S$ and $s\in S$, 
$d^{x,s}_f$ is said to be {\it boundary map} and 
sometimes written by 
$d^s_f$ shortly. 

\sn
A $S$-multi complex $\xb$ is {\it bounded} if 
except finitely many $f \in \Z^S$, 
$x_f=0$. 

\sn
A morphism of $S$-multi complexes $\phib:\xb \to \yb$ is a family of morphisms 
$\{\phi_f:x_f \to y_f\}_{f\in\Z^S}$ satisfying the equality: 
$$d_f^{y,s}\phi_f=\phi_{f-\delta^s}d_f^{x,s}$$
for any $f\in\Z^S$ and $s$ in $S$.
Let us denote the category of $S$-multi complexes on $\cA$ by $\Ch^S(\cA)$ and 
the category of bounded $S$-multi complexes on $\cA$ by $\Ch^S_b(\cA)$.\\ 
Therefore we get the functors 
$$\Ch^S,\ \Ch^S_b:\AddCat \to \AddCat$$ 
where $\AddCat$ is the category of additive categories. 
\end{df}

\begin{ex}
For the empty set $\emptyset$, 
$\Ch^{\phi}(\cA)=\cA$ and 
for an one element set $\{s\}$, 
we write $\Ch^{\{s\}}$ by $\Ch$. 
$\Ch(\cA)$ is the usual category of chain complexes in $\cA$ 
by identifying $\Z^{\{s\}}=\Z$. 
In this case, 
for a complex in $\Ch(\cA)$, 
we simply denote its boundary maps $d_n^{x,s}$ by $d_n^x$ or shortly $d_n$. 
\end{ex}

\begin{rem}[\bf Sign notations]
\label{rem:sign notation}
For a non-negative integer $n$, 
we have the canonical isomorphism of functors
$$\Ch^{(n]} \isoto \Ch^n$$
where $\Ch^n$ is the $n$-times iteration of the functor $\Ch$.
For the $(n]$-multi complex $\xb$ in $\cA$, 
we put ${d'}_f^k:={(-1)}^{\overset{n}{\underset{t=k+1}{\sum}}f(t)}d_f^k$ 
for each $f\in\Z^{(n]}$ and $k\in(n]$. 
Then one can easily verify $\{x_f,{d'}^k_f\}$ is an object in $\Ch^n(\cA)$. 
This correspondence gives the isomorphism above. 
By the identification above, 
we say the $(n]$-multi complexes as 
the $n$-multi complexes.\\ 
the $n$-th cubes in $\cA$ is 
also naturally considered as 
the $n$-th multi complexes in $\cA$. 
\end{rem}

\begin{ex}
\label{ex:associativitymulticomp}
For a pair of disjoint finite sets $S$, $T$, 
by utilizing the isomorphisms in \ref{rem:sign notation}, 
we have a non-canonical isomorphism of functors 
$$\Ch^{S\coprod T}\isoto\Ch^S(\Ch^T),$$
$$\Ch_b^{S\coprod T}\isoto\Ch_b^S(\Ch_b^T).$$
\end{ex}

\begin{df}[\bf Total functors] 
For a finite set $S$, 
we define the total functor 
$$\Tot^S:\Ch_b^S \to \Ch_b$$
as follows. 
$\Tot^S(x)_n:=\underset{\underset{k\in S}{\sum} f(k)=n}{\bigoplus} x_f$
and $d_n:=\underset{\substack{\ \ \ \ \ \ \ s\in S \\  
\underset{k\in S}{\sum}} 
f(k) =n}{\sum} d_f^{s}$.\\
For a pair of disjoint finite sets $S$ and $T$, 
we fix isomorphisms in \ref{ex:associativitymulticomp}, 
$$\alpha:\Ch_b^{S\coprod T}\isoto\Ch_b^S(\Ch_b^T),$$
$$\beta:\Ch_b(\Ch_b^T)\isoto\Ch^{T\coprod\{\ast\}}$$
where $\ast$ is a symbol which is not in $T$. 
Then we have the functor
$\Tot^S_{\alpha,\beta}:\Ch^{S\coprod T} \to \Ch^{T\coprod \{\ast\}}$ 
by the composition of functors below:
$$\Ch_b^{S\coprod T} \onto{\alpha} \Ch_b^S(\Ch_b^T) \onto{\Tot^S} 
\Ch_b(\Ch_b^T) \onto{\beta} \Ch^{T\coprod\{\ast\}}.$$  
We often omit $\alpha$ and $\beta$ in the notation.
\end{df}

\section{Admissible cubes}

In this section, we will study a specific class of cubes, 
so called, {\it admissible cubes} on an abelian category.
From now on, let us fix an abelian category $\cA$ and $S$ a finite set. 
We start from considering a typical example.

\begin{df}
\label{df:A-seq}
A regular sequence $f_1,\cdots,f_n$ is {\it $A$-sequence} 
if $f_{\sigma(1)},\cdots,f_{\sigma(n)}$ is also regular sequence 
for any bijection $\sigma$ on $(n]$.
\end{df}

\begin{rem}
\label{rem:Aseqcriterion}
For any regular sequence $f_1,\cdots,f_n$, 
if the condition below is satisfied, 
then $f_1,\cdots,f_n$ is an $A$-sequence.

\sn
For each $i$, 
$A/(f_1,\cdots,f_i)A$ is complete for $(f_1,\cdots,f_n)$-adic topology.
\end{rem}

\begin{ex}
\label{ex:admcubeex}
For a sequence $f_1$, $\cdots$, $f_n$ in $A$, 
it is $A$-sequence if and only if for any $k$ in $[n-1]$ 
and distinct numbers 
$i_1$, $\cdots$, $i_k$ in $(n]$, 
a map 
$f_{i_{k+1}}:A/(f_{i_1},\cdots,f_{i_k}) \to A/(f_{i_1},\cdots,f_{i_k})$ is injective. 
This is equivalent to the $n$-cube $\kos(\ff_1^n)$ 
(for definition, see \ref{ex:typicalkoscube})
satisfies the following condition:\\ 
For any $k$ in $[n-1]$ and distinct numbers 
$i_1$, $\cdots$, $i_k$ in $(n]$, 
boundary maps of $\Homo_0^{i_1}(\cdots (\Homo_0^{i_k}(\kos(\ff_1^n)))\cdots)$ are injections.\\
In the case, $\kos(\ff_1^n)$ is said to be 
the {\it typical Koszul cubes} associated with $f_1,\cdots,f_n$. 
\end{ex}

\begin{df}[\bf Admissible cubes]
\label{df:admcubes}
Let $x$ be a $S$-cube. 
If $\sharp S=1$, 
$x$ is called {\it admissible} if its boundary morphism is a monomorphism. 
Inductively, for $\sharp S>1$, 
$x$ is called {\it admissible} if its boundary morphisms are monomorphism 
and if for every $k$ in $S$, $\Homo^k_{0}(x)$ is admissible. 
\end{df}

\begin{lem}
\label{lem:coincided} 
For an admissible $S$-cube $x$ in $\cA$ 
and distinct elements $i_1,\cdots,i_k$ in $S$, 
we have the canonical isomorphism:
$$\Homo_0^{i_1}(\Homo_0^{i_2}(\cdots(\Homo_0^{i_k}(x))\cdots)) \isoto 
\Homo_0^{i_\sigma(1)}(\Homo_0^{i_\sigma(2)}(\cdots(\Homo_0^{i_\sigma(k)}(x))\cdots))$$ 
where $\sigma$ is a bijection on $S$. 
\end{lem}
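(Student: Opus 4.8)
The plan is to prove this by induction on $k$, the number of homology operations being compared, after first reducing to the case of transpositions. Since the symmetric group on $\{i_1,\dots,i_k\}$ is generated by adjacent transpositions, it suffices to produce, for an admissible $S$-cube $x$ and a pair of distinct elements $i,j\in S$, a canonical isomorphism $\Homo_0^i(\Homo_0^j(x))\isoto\Homo_0^j(\Homo_0^i(x))$, and then check that these isomorphisms are compatible enough to compose coherently (so that the isomorphism produced does not depend on the chosen factorization of $\sigma$ into transpositions). For the coherence I would invoke a Mac Lane style coherence argument: all the isomorphisms in sight are built from the universal properties of kernels and cokernels, so any two ways of reassociating give the same map.

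For the key base case, fix $i\neq j$ and consider the $\{i,j\}$-subcube structure; by Example~\ref{ex:associativity} and Example~\ref{ex:associativitymulticomp} we may regard the relevant part of $x$ as an $\{i,j\}$-cube in $\cA$, i.e.\ a commutative square
$$\xymatrix{
x_{\{i,j\}} \ar[r]^{d^i} \ar[d]_{d^j} & x_{\{j\}} \ar[d]^{d^j}\\
x_{\{i\}} \ar[r]_{d^i} & x_{\emptyset}
}$$
(suppressing the rest of $S$, which is carried along as a parameter). All four maps are monomorphisms since $x$ is admissible. Now $\Homo_0^i(x)$ has the bottom row's cokernel $\coker(x_{\{i\}}\to x_\emptyset)$ as its $\emptyset$-vertex and $\coker(x_{\{i,j\}}\to x_{\{j\}})$ as its $\{j\}$-vertex; applying $\Homo_0^j$ to it yields the cokernel of the induced map between these two cokernels. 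Symmetrically $\Homo_0^j(\Homo_0^i(x))$ is obtained by first taking cokernels in the $j$-direction and then in the $i$-direction. The point is that both are canonically identified with the same object, namely the pushout-style cokernel
$$\coker\bigl(x_{\{i,j\}}\xrightarrow{(d^i,\,d^j)} x_{\{j\}}\oplus x_{\{i\}}\xrightarrow{d^j-d^i} x_\emptyset\bigr)$$
read off the total complex of the square; this is precisely the statement that ``cokernel of a cokernel is the cokernel of the total map,'' which holds in any abelian category. I would establish this by an explicit diagram chase or, more cleanly, by the universal property: maps out of either iterated cokernel into a fixed object $Z$ correspond bijectively and naturally to maps $x_\emptyset\to Z$ annihilating the images of both $d^i$ and $d^j$. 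This gives the canonical isomorphism, and it is manifestly symmetric in $i$ and $j$.

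With the transposition case in hand, the induction for general $\sigma$ is bookkeeping: given two factorizations of $\sigma$ into adjacent transpositions, pass between them using the braid and commutation relations in the symmetric group, and observe that each relation corresponds to an identity among the canonical isomorphisms already constructed — the commutation relation $(ab)(cd)=(cd)(ab)$ for disjoint pairs follows because the relevant cokernels are taken in independent directions, and the braid relation $(ab)(bc)(ab)=(bc)(ab)(bc)$ reduces to the three-fold version of the ``cokernel of a cokernel'' identity, i.e.\ the associativity of forming $\coker$ of the total complex of a $3$-dimensional corner, which again is a consequence of universal properties. The main obstacle I anticipate is not any single computation but organizing this coherence cleanly: one must be careful that $\Homo_0^k$ really is functorial and that all the comparison maps are natural, so that the final isomorphism is well defined independent of choices. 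The cleanest route is probably to phrase everything in terms of the total complex $\Tot x$ together with its filtration by the directions in $S$, reducing the whole statement to the standard fact that truncation/homology in the derived category of $\cA$ is independent of the order in which one contracts the directions of a multicomplex whose rows are all exact except in degree $0$.
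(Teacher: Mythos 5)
Your proposal is correct and follows essentially the same route as the paper: reduce to the case of two directions and show that the two iterated cokernels are canonically identified with a common object. The paper carries out the two-direction case via the $3\times 3$-lemma rather than your universal-property/total-complex description of the common cokernel, and it silently omits the coherence bookkeeping you discuss, but these are presentational differences only.
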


\begin{proof}[\bf Proof]
It is enough to prove the case for $S=\{1,2\}$. 
Then by $3 \times 3$-lemma, we turn out that $\Homo^2(\Homo^1(x))$ and 
$\Homo^1(\Homo^2(x))$ is canonical isomorphic to the object $y$ in the diagram below. 
$$\xymatrix{
x_{11} \ar@{>->}[r] \ar@{>->}[d] & 
x_{01} \ar@{->>}[r] \ar@{>->}[d] & 
{\Homo^2_0(x)}_1 \ar@{>->}[d]\\
x_{01} \ar@{>->}[r] \ar@{->>}[d] & 
x_{00} \ar@{->>}[r] \ar@{->>}[d] & 
{\Homo^2_0(x)}_0 \ar@{->>}[d]\\
{\Homo^1_0(x)}_1 \ar@{>->}[r] &
{\Homo^1_0(x)}_0 \ar@{->>}[r] &
y.}$$
\end{proof}

\begin{nt}
For an admissible $S$-cube $x$ in $\cA$ and a 
subset $T=\{i_1,\cdots,i_k\} \subset S$, 
we put 
$\Homo^T_0(x):=\Homo_0^{i_1}(\Homo_0^{i_2}(\cdots(\Homo_0^{i_k}(x))\cdots))$.
\end{nt}

\begin{prop}
\label{prop:tothomology}
Let $x$ be a $S$-cube in $\cA$. 
Assume that every $k$-face of $x$ is admissible, 
then for any $k$ in $S$, 
we have 
$$\Homo_p(\Tot^{S}(x))\isoto 
\begin{cases}
\Homo_p(\Homo_0^{S\ssm\{k\}}(x)) & \text{for $p=0,\ 1$}\\
0 & \text{otherwise}
\end{cases}
.$$
In particular if we assume that 
$x$ is admissible, 
then we have
$$\Homo_p(\Tot^{S}(x))=   
\begin{cases}
\Homo^{S}_0(x) & \text{for $p=0$}\\
0 & \text{otherwise}
\end{cases}
.$$
\end{prop}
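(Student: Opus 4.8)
The plan is to prove Proposition~\ref{prop:tothomology} by induction on $\sharp S$, reducing the homology of the total complex of an $S$-cube to an iterated mapping-cone computation in one distinguished direction $k$. First I would fix $k \in S$ and use Example~\ref{ex:associativity} together with the definition of $\Tot^S$ to identify $\Tot^S(x)$ with the total complex of a two-term complex of chain complexes
$$\Tot^{S\ssm\{k\}}(\delta_{S,\dom}^k(x)) \onto{d^k} \Tot^{S\ssm\{k\}}(\delta_{S,\ran}^k(x)),$$
placed in degrees $1$ and $0$; in other words $\Tot^S(x) \cong \Cone\!\left(d^k : \Tot^{S\ssm\{k\}}(\delta_{S,\dom}^k(x)) \to \Tot^{S\ssm\{k\}}(\delta_{S,\ran}^k(x))\right)$ up to a shift and the usual sign twist of Remark~\ref{rem:sign notation}. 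Since every $k$-face of $x$ is admissible, both faces $\delta_{S,\dom}^k(x)$ and $\delta_{S,\ran}^k(x)$ are admissible $(S\ssm\{k\})$-cubes, so by the inductive hypothesis (base case $S\ssm\{k\}=\emptyset$ being trivial) each of these two total complexes has homology concentrated in degree $0$, namely $\Homo_0^{S\ssm\{k\}}(\delta_{S,\dom}^k(x))$ and $\Homo_0^{S\ssm\{k\}}(\delta_{S,\ran}^k(x))$ respectively.

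Next I would feed this into the long exact homology sequence of the mapping cone. Because both source and target have homology only in degree $0$, the cone sequence collapses to
$$0 \to \Homo_1(\Tot^S(x)) \to \Homo_0^{S\ssm\{k\}}(\delta_{S,\dom}^k(x)) \onto{\overline{d^k}} \Homo_0^{S\ssm\{k\}}(\delta_{S,\ran}^k(x)) \to \Homo_0(\Tot^S(x)) \to 0,$$
and all higher homology vanishes. The key point then is to recognize the induced map $\overline{d^k}$ on the degree-zero homology: I claim that $\overline{d^k}$ is precisely the boundary morphism in the $k$-direction of the $(S\ssm\{k\})$-cube $\Homo_0^{S\ssm\{k\}}(x)$ — equivalently, using Lemma~\ref{lem:coincided} to commute the $\Homo_0$ operations, that taking $\Homo_0$ in the directions $S\ssm\{k\}$ commutes with passing to the $k$-face, so that $\Homo_0^{S\ssm\{k\}}(\delta_{S,\ran}^k(x))$ and $\Homo_0^{S\ssm\{k\}}(\delta_{S,\dom}^k(x))$ are the range and domain of the single remaining arrow of $\Homo_0^{S\ssm\{k\}}(x)$. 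Granting this identification, $\Homo_1(\Tot^S(x)) = \ker \overline{d^k} = \Homo_1^k(\Homo_0^{S\ssm\{k\}}(x)) = \Homo_1(\Homo_0^{S\ssm\{k\}}(x))$ and dually $\Homo_0(\Tot^S(x)) = \coker \overline{d^k} = \Homo_0(\Homo_0^{S\ssm\{k\}}(x))$, which is the asserted formula. The ``in particular'' clause follows since admissibility of $x$ forces the remaining boundary morphism of $\Homo_0^{S\ssm\{k\}}(x)$ to be a monomorphism, killing $\Homo_1$.

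The main obstacle I anticipate is the bookkeeping in the first two steps: one must verify that the sign twist in Remark~\ref{rem:sign notation} genuinely turns the $S$-cube's commuting squares into the anticommuting squares of a multicomplex, and hence that the degree-$1$ and degree-$0$ pieces of $\Tot^S(x)$ assemble into an honest mapping cone of a chain map (rather than something off by signs). This is routine but must be done carefully, and is exactly the kind of computation the problem statement says not to grind through in detail — I would isolate it as a lemma or simply cite the standard fact that the total complex of a bounded multicomplex filtered in one coordinate computes via the cone of its faces. The other subtle point is the naturality needed to identify $\overline{d^k}$ with the cube boundary of $\Homo_0^{S\ssm\{k\}}(x)$; here I would appeal to the functoriality of $\Homo_0^k$ and the natural transformations $\pi_S^k$ recorded in Definition~\ref{df:homologycube}, applied coordinate by coordinate across $S\ssm\{k\}$, together with Lemma~\ref{lem:coincided} to know the order of the iterated $\Homo_0$'s is immaterial. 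Everything else — the base case, the collapse of the cone sequence, the reading off of kernel and cokernel — is formal.
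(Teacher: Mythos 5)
Your proposal is correct and follows essentially the same route as the paper: induction on $\sharp S$, peeling off the direction $k$ to view $\Tot^S(x)$ as the total complex of the two-row object $\Tot^{S\ssm\{k\}}(\delta_{S,\dom}^k(x)) \to \Tot^{S\ssm\{k\}}(\delta_{S,\ran}^k(x))$, applying the inductive hypothesis to the two admissible faces, and identifying the surviving map with the boundary of $\Homo_0^{S\ssm\{k\}}(x)$. The only cosmetic difference is that the paper packages the final collapse as the degenerate spectral sequence of the bicomplex (with its short exact sequences) rather than the long exact sequence of the mapping cone, which in this two-row situation is the same computation.
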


\begin{proof}[\bf Proof]
We intend to use induction for the number of elements in $S$. 
For $\sharp S=1$, the assertion is trival. 
Now let us consider for $n\sharp S>1$. 
For simplicity, we put $T=S\ssm\{k\}$. 
Let us consider the spectral sequence associated with 
the bi-complex $\Tot^T(x)$, 
$$\Homo_p^v\Homo_q^h(\Tot^T(x)) 
\Rightarrow \Homo_{p+q}(\Tot^S(x)).$$
Considering the short exact sequences 
associated with the spectral sequence above 
$$0 \to \Homo_0^v\Homo_q^h(\Tot^T(x)) \to \Homo_q(\Tot^S(x)) 
\to \Homo_1^v\Homo_{q-1}^h(\Tot^T(x)) \to 0,$$
and noticing that by inductive hypothesis, 
we have the isomorphisms
$$\Homo_q^h(\Tot^T(x))\isoto
\begin{cases}
\Homo_0^T(x) & \text{for $q=0$}\\
0 & \text{otherwise}
\end{cases},$$
we get the isomorphisms
$$\Homo_p(\Tot^S(x))\isoto 
\begin{cases}
\Homo_p^v(\Homo_0^T(x)) & \text{for $p=0,\ 1$}\\
0 & \text{otherwise}
\end{cases}
.$$
Hence we obtain the assertion.
\end{proof}

\section{Generalized Koszul resolution}

In this section, we introduce a generalization of 
Koszul resolution. 
Let us start from reviewing {\it{Koszul complexes}}.

\begin{rev}[\bf Koszul complex]
Let $f:P \to Q$ be a $A$-module homomorphism 
between projective $A$-modules. 
The {\it $n$-th Koszul complex} 
associated with $f$ is denoted by $\kos^n(f)$ 
and defined as follows:
$${\kos^n(f)}_k=\Lambda^k(P)\otimes\Sym^{n-k}(Q)$$
and the Koszul differential 
$d_{k+1}:{\kos^n(f)}_{k+1} \to {\kos^n(f)}_k$ 
is given by 
$$p_1\wedge\cdots\wedge p_{k+1} \otimes q_{k+2}\cdots q_n \mapsto
\overset{k+1}{\underset{i=1}{\sum}}
(-1)^{k+1-i}
p_1\wedge\cdots\wedge\hat{p_i} \wedge\cdots \wedge p_{k+1} \otimes 
f(p_i) q_{k+2}\cdots q_n
.$$
\end{rev}

\begin{ex}
Let $f_1,\cdots,f_n$ be elements in $A$. 
We consider a homomorphism 
$\ff=
\begin{pmatrix}
f_1 & \cdots & f_n
\end{pmatrix}
:A^{\oplus n} \to A$. 
For simplicity, 
the $n$-th Koszul complex associated with $\ff$ 
is denoted by $\kos(\ff)$. 
Notice that $\Tot^{(n]}(\kos(\ff_1^n))$, 
the total complex of $\kos(\ff_1^n)$ defined in \ref{ex:typicalkoscube}, 
is isomorphic to $\kos(\ff)$ above.\\ 
It is well-known that if $f_1,\cdots,f_n$ forms regular sequence, 
then $\Homo_k(\kos(\ff))$ is trivial for $k \geqq 1$. 
In this section, we intend to generalize this fact.
\end{ex}

\begin{df}[\bf Generalized Koszul complexes]
\label{df:GKC}
Let $n$, $m$ be positive integers and $f_1,\cdots,f_n$ elements in $A$. 
For a family of endomorphisms on 
$A^{\oplus m}$ $\dd=\{d_S^j\}_{S\in\cP_n,\ j\in S}$ such that 
$\det d_S^j=f_j$ for any $S\in\cP_n$ and $j\in S$, 
we define the {\it generalized Koszul complex} associated with $\dd$, 
$\kos(\dd)$ as follows.\\ 
$\kos(\dd)_k:=\underset{\substack{S\in\cP_n \\ 
\sharp S=k}}{\bigoplus} F_S$ where $F_S:=A^{\oplus m}$ and 
its boundary maps are defined by 
$$(-1)^{\overset{n}{\underset{t=j+1}{\sum}}\chi_S(t)}d_S^j:F_S \to F_{S\ssm\{j\}}$$on its $F_S$ component. 
\end{df}

\begin{rem}
The definition in \ref{df:GKC} is equivalent to 
that $\kos(\dd)$ is the total complex of the following $n$-cube $x$:\\ 
$x_S:=A^{\oplus m}$ and $d_S^s:=d_S^s$ for any $S\in\cP_n$ and $s\in S$.\\ 
Therefore we also denote $x$ by $\kos(\dd)$. 
\end{rem}

Let us recall that 
a complex $E_{\bullet}$ on an abelian category 
is said to be {\it $n$-spherical} 
if $\Homo_k(E_{\bullet})=0$ unless $k\neq n$. 

\begin{thm}
\label{thm:resolcriterion}
For a family $\dd$ as in the notation \ref{df:GKC} 
if $f_1,\cdots,f_n$ forms a regular sequence, 
$\kos(\dd)$ is $0$-spherical. 
\end{thm}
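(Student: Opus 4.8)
The statement asserts that when $f_1, \dots, f_n$ is a regular sequence, the generalized Koszul complex $\kos(\dd)$ — which we view as the total complex of the $n$-cube $x$ with all vertices $A^{\oplus m}$ and boundary maps $d_S^j$ satisfying $\det d_S^j = f_j$ — has vanishing homology in all degrees except $0$. The natural strategy is to reduce this to Proposition~\ref{prop:tothomology}: if we can show the cube $x = \kos(\dd)$ is admissible in the sense of Definition~\ref{df:admcubes}, then $\Homo_p(\Tot^{(n]} x) = \Homo_0^{(n]}(x)$ for $p = 0$ and $0$ otherwise, which is exactly the $0$-spherical conclusion. So the whole problem becomes: \emph{prove that $\kos(\dd)$ is admissible whenever $f_1, \dots, f_n$ is a regular sequence.}

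\textbf{Step 1: Reduce admissibility to a statement about a single boundary map and the quotients $\Homo_0^k$.} By the inductive definition of admissibility, I need two things: (a) every boundary morphism $d_S^j \colon A^{\oplus m} \to A^{\oplus m}$ is a monomorphism, and (b) for each $k \in (n]$, the $(n]\ssm\{k\}$-cube $\Homo_0^k(\kos(\dd))$ is again admissible. Point (a) is immediate: $d_S^j$ is an endomorphism of a free module with determinant $f_j$, and $f_j$ is a nonzerodivisor on $A$ (being part of a regular sequence), so $d_S^j$ is injective — multiplication by $\det$ on the exterior power, or simply the adjugate identity $\mathrm{adj}(d_S^j)\, d_S^j = f_j \cdot \id$, shows the kernel is killed by a nonzerodivisor. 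For (b), the vertices of $\Homo_0^k(\kos(\dd))$ are the cokernels $\coker(d_{T\cup\{k\}}^k \colon A^{\oplus m} \to A^{\oplus m})$ for $T \in \cP((n]\ssm\{k\})$; since $d_{T\cup\{k\}}^k$ has determinant $f_k$, this cokernel is annihilated by $f_k$ and has projective dimension $\le 1$, i.e. it is a pure weight one module supported on $V(f_k)$ (in the Cohen–Macaulay case) — but more to the point, it is a finitely generated $A/f_k$-module, and the residual boundary maps make $\Homo_0^k(\kos(\dd))$ into an object of the same shape over the ring $A/f_k$.

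\textbf{Step 2: Induct on $n$, base-changing along $A \twoheadrightarrow A/f_k$.} The key point is that $f_1, \dots, \hat{f_k}, \dots, f_n$ — the sequence with $f_k$ removed — maps to a regular sequence in $A/f_k$ (this uses that the original sequence is permutable up to the hypotheses, or one simply observes that regular sequences modulo one of their members remain regular; here I would pick $k$ to be the last index, or invoke that $f_1, \dots, f_n$ regular implies $f_1, \dots, f_{k-1}, f_{k+1}, \dots, f_n$ is regular on $A/f_k$). The residual boundary morphisms of $\Homo_0^k(\kos(\dd))$ are endomorphisms of the cokernel modules whose "determinants" (now as endomorphisms over $A/f_k$ of modules that are no longer free — so one must be slightly careful) should reduce to $f_j \bmod f_k$. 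The cleanest route is: rather than tracking determinants over $A/f_k$, I would prove directly by induction on $n$ that the $(n]\ssm\{k\}$-cube $\Homo_0^k(\kos(\dd))$ satisfies the hypothesis of Proposition~\ref{prop:tothomology} (every face admissible) and compute its homology. The induction hypothesis, applied to the $(n-1)$-dimensional faces, gives that each face is $0$-spherical; combined with the $3\times 3$-lemma arguments underpinning Lemma~\ref{lem:coincided}, this yields admissibility of the whole $n$-cube.

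\textbf{Main obstacle.} The delicate point is Step 2: the cube $\Homo_0^k(\kos(\dd))$ no longer has free vertices, so the original definition of a Koszul cube (which requires free vertices with a determinant condition) does not literally apply to it, and one cannot blindly re-invoke the determinant hypothesis. The fix is to phrase the inductive claim at the right level of generality — either (i) prove admissibility of $\kos(\dd)$ directly by checking that each iterated $\Homo_0$ produces morphisms that remain injective, using the adjugate identity $\mathrm{adj}(d_S^j) d_S^j = f_j \id$ at each stage to see that the relevant maps are injective modulo nonzerodivisors; or (ii) strengthen the induction to: for any finite $A$-module $N$ with $\mathrm{pd}_A N \le 1$ and $f_{k+1}, \dots, f_n$ a regular sequence on $\mathrm{Supp}(N)$-compatible quotients, the analogous cube is $0$-spherical. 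I expect option (i) to be the one the authors take, since the whole paper is organized around cubes and their $\Homo_0^k$'s, and the adjugate identity is exactly the tool that propagates injectivity through the recursion. Once admissibility is established, the conclusion is a one-line appeal to Proposition~\ref{prop:tothomology}.
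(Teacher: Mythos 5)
Your plan inverts the logical structure of the paper, and the inversion is exactly where the gap sits. You propose to first prove that $\kos(\dd)$ is admissible and then invoke Proposition~\ref{prop:tothomology}; but in the paper admissibility of $\kos(\dd)$ is Corollary~\ref{cor:admcriterion}, which is \emph{deduced from} Theorem~\ref{thm:resolcriterion} together with Proposition~\ref{prop:tothomology} — and only under the stronger hypothesis that $f_1,\dots,f_n$ is an $A$-sequence, whereas the theorem assumes a mere regular sequence (your Step~2 would need to form $\Homo_0^k$ in every direction $k$, which already presupposes permutability). The concrete hole is the point you yourself flag as the ``main obstacle'' and then do not close: the boundary maps of $\Homo_0^k(\kos(\dd))$ are maps between cokernels, and the adjugate identity $\mathrm{adj}(d_S^j)\,d_S^j=f_j\cdot\id$ lives on the free level and does not descend to give injectivity there. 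In fact, a snake-lemma computation on a two-dimensional face shows that injectivity of the induced map $\coker d^{k}_{\{j,k\}}\to\coker d^{k}_{\{k\}}$ is \emph{equivalent} to the vanishing of $\Homo_1$ of the totalization of that face, i.e.\ to a special case of the statement being proved; so option~(i) of your proposed fix is circular as stated, and option~(ii) is a declaration of intent rather than an argument. That the adjugate identity alone cannot suffice is visible from the example $n=2$, $m=1$, $d_S^1=d_S^2=f$ for a single nonzerodivisor $f$: every boundary map is injective with determinant $f$, yet the induced map on $\coker(f\colon A\to A)=A/f$ is zero, and the total complex has nontrivial $\Homo_1$ — the regular sequence hypothesis must enter through a grade condition, which your argument never uses beyond injectivity of the top-level maps.

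The paper's actual proof bypasses admissibility entirely and runs through the Buchsbaum--Eisenbud acyclicity criterion (Theorem~\ref{thm:BEthm}). Since the case $m=1$ is the classical Koszul complex $\kos(\ff)$, known to be $0$-spherical for a regular sequence, the criterion yields $\grade I_{r_i}(d_i^{\ff})\geqq i$; the inclusion of determinantal ideals $I_{r_i}(d_i^{\ff})\subset I_{mr_i}(d_i^{\dd})$ then transfers this grade bound to $\kos(\dd)$, and a second application of the criterion concludes. If you want to rescue your inductive scheme, you would need an independent source of injectivity for the iterated $\Homo_0$'s; the natural one is precisely this grade estimate on ideals of minors, at which point you may as well apply Buchsbaum--Eisenbud directly.
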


To prove the theorem above, we need to use Buchsbaum and Eisenbud Theorem. 
To state the theorem, we start from recalling the notation of ideal of minors. 

\begin{df}[\bf Ideal of minors]
\label{df:idealminor}
Let $U$ be an $m \times n$ matrix over $A$ 
where $m$, $n$ are positive integers. 
For $t$ in $(\min(m,n)]$ we then denote by $I_t(U)$ the ideal generated by 
the $t$-minors of $U$, that is, 
the determinant of $t\times t$ sub-matrices of $U$.\\ 
For an $A$-module homomorphism 
$\phi:M\to N$ between finite free $A$-modules, 
let us choose a matrix representation $U$ 
with respect to bases of $M$ and $N$. 
One can easily prove that the ideal $I_t(U)$ only depend on $\phi$. 
Therefore we put $I_t(\phi):=I_t(U)$. 
\end{df}

\begin{thm}[\bf Buchsbaum-Eisenbud \cite{BE73}]
\label{thm:BEthm}
For a complex of finite free $A$-modules
$$F_{\bullet}:0 \to F_s \onto{\phi_s} F_{s-1} \onto{\phi_{s-1}} \to \cdots \to 
F_1 \onto{\phi_1} F_0 \to 0,$$ 
set $r_i=\overset{s}{\underset{j=i}{\sum}}(-1)^{j-i} \rank F_j$. 
Then the following are equivalent:

\sn
{\rm (1)} $F_{\bullet}$ is $0$-spherical.

\sn
{\rm (2)} $\grade I_{r_i}(\phi_i) \geqq i$ for any $i$ in $(s]$. 
\end{thm}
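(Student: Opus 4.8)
The plan is to prove both implications by localizing at the primes of $A$, since $0$-sphericity of a bounded complex of finite free modules is a local property: $F_{\bullet}$ is $0$-spherical if and only if $(F_{\bullet})_{\pp}$ is $0$-spherical over $A_{\pp}$ for every $\pp \in \Spec A$. Two classical inputs drive the argument. The first is McCoy's rank criterion: a map $\phi\colon F \to G$ of finite free modules is injective exactly when $\rank \phi = \rank F$ and $\grade I_{\rank F}(\phi) \geqq 1$ (equivalently $I_{\rank F}(\phi)$ contains a nonzerodivisor, i.e. is not contained in any associated prime of $A$). The second is the Peskine--Szpiro acyclicity lemma: over a Noetherian local ring $(R,\mm)$, a complex $0 \to L_s \to \cdots \to L_0$ with $\depth L_i \geqq i$ for all $i \geqq 1$, and with each $\Homo_i$ ($i \geqq 1$) either zero or of depth $0$, must satisfy $\Homo_i = 0$ for all $i \geqq 1$. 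I would either cite this lemma or insert its short depth-chasing induction as a preliminary. Throughout I use the localization inequality $\grade(I A_{\pp}) \leqq \depth A_{\pp}$ for $I \subseteq \pp$, together with the fact that a regular sequence localizes to a regular sequence, so that $\grade I \leqq \grade(I A_{\pp})$ for $\pp \supseteq I$.

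For the implication $(1) \Rightarrow (2)$, I first deduce the rank equalities $\rank \phi_i = r_i$ by localizing at the minimal primes of $A$, where $A_{\pp}$ is Artinian and exactness of $(F_{\bullet})_{\pp}$ forces additivity of ranks across the complex; this identifies the alternating sum $r_i$ with $\rank \phi_i$. For the grade condition, I recast it as: for every $\pp \supseteq I_{r_i}(\phi_i)$ one has $\depth A_{\pp} \geqq i$. Suppose instead that $\pp \supseteq I_{r_i}(\phi_i)$ with $\depth A_{\pp} < i$. Localizing, $(F_{\bullet})_{\pp}$ is a finite free resolution of $M_{\pp}$, where $M = \Homo_0(F_{\bullet})$, while $I_{r_i}((\phi_i)_{\pp}) \subseteq \pp A_{\pp}$ shows that $\phi_i$ drops below its expected rank $r_i$ modulo the maximal ideal. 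Hence this resolution cannot be split off at step $i$, which forces $\pd_{A_{\pp}} M_{\pp} \geqq i$; but Auslander--Buchsbaum gives $\pd_{A_{\pp}} M_{\pp} = \depth A_{\pp} - \depth M_{\pp} \leqq \depth A_{\pp} < i$, a contradiction.

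For $(2) \Rightarrow (1)$, I would check $0$-sphericity at each prime $\pp$ by induction on $\dim A_{\pp}$. Assuming $(F_{\bullet})_{\qq}$ is $0$-spherical for all $\qq \subsetneq \pp$, the higher homologies $\Homo_k(F_{\bullet})_{\pp}$ ($k \geqq 1$) are supported only at the closed point of $\Spec A_{\pp}$, hence are zero or of depth $0$. Write $d = \depth A_{\pp}$. For every index $i > d$, the inequalities $\grade(I_{r_i}(\phi_i) A_{\pp}) \leqq d$ and $\grade I_{r_i}(\phi_i) \geqq i > d$ force $I_{r_i}(\phi_i) A_{\pp} = A_{\pp}$; thus $(\phi_i)_{\pp}$ carries a unit $r_i \times r_i$ minor, and starting from the top the maps $\phi_s, \ldots, \phi_{d+1}$ peel off a split-exact tail via successive splittings $A_{\pp}^{r_i} \isoto A_{\pp}^{r_i}$. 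Removing this tail leaves a complex of length $\leqq d$ with inherited expected ranks and grade conditions, in which $\depth (F_i)_{\pp} = d \geqq i$ for every surviving index; the acyclicity lemma then forces the remaining higher homologies to vanish, so $(F_{\bullet})_{\pp}$ is $0$-spherical, completing the induction.

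The main obstacle is the interface between grade, depth, and localization on which both directions rest — in particular proving (or correctly invoking) the acyclicity lemma, whose own proof is a delicate induction comparing $\depth L_i$ with $\depth \Homo_i$. The forward grade estimate is the most error-prone point: one must turn the rank drop of $\phi_i$ at $\pp$ into a genuine lower bound on $\pd_{A_{\pp}} M_{\pp}$, and dually one must verify that the split-exact tail in the converse carries the expected ranks and grade bounds down to the truncated complex. An alternative would be to follow the original ``what makes a complex exact'' analysis of \cite{BE73}, which treats the maps $\phi_i$ directly and trades the acyclicity lemma for an explicit study of depths of the ideals of minors.
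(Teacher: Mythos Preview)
The paper does not prove this statement at all: Theorem~\ref{thm:BEthm} is quoted from \cite{BE73} as a black box, and the very next proof environment in the paper is for Theorem~\ref{thm:resolcriterion}, not for the Buchsbaum--Eisenbud criterion. So there is no ``paper's own proof'' to compare your proposal against.

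That said, your sketch is a sound outline of the standard modern proof and is not the route taken in \cite{BE73}. Buchsbaum and Eisenbud argue more directly with the Fitting-type ideals $I_{r_i}(\phi_i)$: they show that exactness forces $\rank\phi_i + \rank\phi_{i+1} = \rank F_i$ and then, by an explicit analysis of minors and radicals, obtain the grade bounds without invoking Auslander--Buchsbaum; conversely they build acyclicity from the grade hypotheses by their own depth-sensitivity argument rather than by quoting Peskine--Szpiro. Your approach --- localize, read off ranks over Artinian localizations, use Auslander--Buchsbaum for $(1)\Rightarrow(2)$, and peel a split tail plus the acyclicity lemma for $(2)\Rightarrow(1)$ --- is the textbook streamlining (as in Bruns--Herzog or Eisenbud's book). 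It is shorter and more conceptual; the original is more self-contained and avoids importing the acyclicity lemma. The one place in your write-up that would need tightening in a full proof is the step ``$I_{r_i}((\phi_i)_{\pp})\subseteq \pp A_{\pp}$ forces $\pd_{A_{\pp}}M_{\pp}\geqq i$'': you should make explicit that if $\pd_{A_{\pp}}M_{\pp}<i$ then the tail $(F_s)_{\pp}\to\cdots\to (F_i)_{\pp}$ splits off, whence $(\phi_i)_{\pp}$ has a unit $r_i$-minor, contradicting containment in $\pp A_{\pp}$.
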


\begin{proof}[\bf Proof of Theorem~\ref{thm:resolcriterion}]
We denote boundary maps 
of $\kos(\dd)$ and $\kos(\ff_1^n)$ 
by $d_k^{\dd}$ and $d_k^{\ff}$ respectively. 
We also put 
$$r_i=\overset{n}{\underset{j=i}{\sum}}(-1)^{j-i} \rank \kos(\ff_1^n)_j 
=\overset{n}{\underset{j=i}{\sum}}(-1)^{j-i} 
\begin{pmatrix}
n\\
j
\end{pmatrix}.$$
Then we have 
$$\overset{n}{\underset{j=i}{\sum}}(-1)^{j-i} \rank \kos(\dd_1^n)_j=mr_i.$$
In the notation \ref{df:GKC} if $m=1$, 
then $\kos(\dd)=\kos(\ff)$ and 
in this case, 
the assertion is well-known. 
Therefore by \ref{thm:BEthm}, 
it follows that $\grade I_{r_i}(d_i^{\ff}) \geqq i$ for any $i$ in $(n]$. 
But inspection shows that for each $i\in(n]$, 
$I_{r_i}(d_i^{\ff})\subset I_{nr_i}(d_i^{\dd})$. 
Therefore we use Theorem~\ref{thm:BEthm} again, 
we turn out that $\kos(\dd)$ is $0$-spherical. 
\end{proof}

\begin{cor}
\label{cor:admcriterion}
For a family $\dd$ as in the notation \ref{df:GKC}, 
if $f_1,\cdots,f_n$ forms $A$-sequences, 
then the cube $\kos(\dd)$ is admissible.
\end{cor}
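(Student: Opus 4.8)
The plan is to argue by induction on $n = \sharp (n]$, using Theorem~\ref{thm:resolcriterion} to control the $0$-th homology at each stage and Proposition~\ref{prop:tothomology} to relate the total complex of a cube to its iterated $\Homo_0^k$. For $n=1$ there is nothing beyond noting that $\kos(\dd)$ being $0$-spherical forces the single boundary morphism $d_{\{1\}}^1$ to be injective, since $\Homo_1(\kos(\dd)) = \ker d_{\{1\}}^1 = 0$; this is exactly the definition of admissibility for a cube on a single element set. So assume $n>1$ and the statement holds for all shorter $A$-sequences.

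First I would check that every boundary morphism $d_S^j$ of $\kos(\dd)$ is a monomorphism. Indeed each such morphism is an endomorphism of $A^{\oplus m}$ with $\det d_S^j = f_j$, and $f_j$ is a nonzerodivisor (it occurs in the regular sequence $f_1,\dots,f_n$, and since the sequence is an $A$-sequence we may as well assume $j=1$); multiplication by $\det d_S^j$ on $A^{\oplus m}$ factors through $d_S^j$, hence $d_S^j$ is injective. Next, by Definition~\ref{df:admcubes}, it remains to show that for every $k \in (n]$ the $(n{-}1)$-cube $\Homo_0^k(\kos(\dd))$ is admissible.

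The key point is to identify $\Homo_0^k(\kos(\dd))$ as a generalized Koszul cube over the quotient ring $\bar A := A/(f_k)$ associated with the images $\bar f_i$ ($i \neq k$) of the remaining elements. Passing to $\bar A$ and the sequence $\{\bar f_i\}_{i\neq k}$: this is again a regular sequence, and in fact an $A$-sequence (here is where we use Definition~\ref{df:A-seq} together with Remark~\ref{rem:Aseqcriterion}-type reasoning — reordering $f_k$ to the front, $f_k, f_{i_1},\dots$ is regular, so $\bar f_{i_1},\dots$ is regular over $\bar A$, and the same holds after any permutation of the indices $\neq k$). Each vertex $\Homo_0^k(\kos(\dd))_T = \coker d_{T\cup\{k\}}^k$ is $\bar A^{\oplus m}$ because $d_{T\cup\{k\}}^k$ is an injective endomorphism of $A^{\oplus m}$ whose determinant is $f_k$, so its cokernel has length $m$ over $A/(f_k)$ and is in fact free of rank $m$ over $\bar A$; and the induced boundary maps $\bar d_T^j$ on these cokernels are endomorphisms of $\bar A^{\oplus m}$ with $\det \bar d_T^j = \bar f_j$. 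Thus $\Homo_0^k(\kos(\dd))$ is literally $\kos(\bar \dd)$ for a family $\bar\dd$ over $\bar A$ of the type in Definition~\ref{df:GKC}, associated with the $A$-sequence $\{\bar f_i\}_{i\neq k}$. By the inductive hypothesis applied over $\bar A$, this $(n{-}1)$-cube is admissible, which completes the induction.

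The main obstacle I anticipate is the identification in the previous paragraph — specifically, verifying cleanly that $\coker d_{T\cup\{k\}}^k$ is free over $\bar A = A/(f_k)$ and that the induced differentials have the claimed determinants. The freeness requires knowing that $d_{T\cup\{k\}}^k$ reduces modulo $f_k$ to ... well, not quite an isomorphism, so one must instead argue via the structure of $\coker$ of a map with determinant a nonzerodivisor: over $A/(f_k)$ one uses that such a cokernel is a maximal Cohen--Macaulay module of rank $m$ and invoke that it is free when $\bar A$ is regular along its support, or more elementarily observe that the commutativity relations in the cube (Lemma~\ref{lem:genofcubemap}(2)) force $f_k$ to act compatibly so that the cokernels assemble into a cube over $\bar A$. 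Making the determinant computation $\det \bar d_T^j = \bar f_j$ precise — e.g. by choosing bases compatibly via the cube relation $d_{T\cup\{k\}}^k \, d_{T\cup\{j,k\}}^j = \pm\, d_{T\cup\{j\}}^j \, d_{T\cup\{j,k\}}^k$, which is exactly what Lemma~\ref{lem:cube lemma} and the admissibility bookkeeping are for — is the technical heart, but it is a finite-rank linear-algebra check over $\bar A$ once the freeness is in hand.
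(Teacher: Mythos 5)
Your base case and your verification that every boundary morphism of $\kos(\dd)$ is a monomorphism (via the adjugate, together with the fact that each $f_j$, being a member of an $A$-sequence, is a nonzerodivisor on $A$) are fine, and the overall shape---induction on $n$, reducing to admissibility of $\Homo_0^k(\kos(\dd))$---is reasonable. But the step you yourself flag as the main obstacle is a genuine gap, and neither of the patches you sketch closes it. The claim that $\coker d^k_{T\cup\{k\}}$ is free of rank $m$ over $\bar{A}=A/(f_k)$ is false: for $d=\mathrm{diag}(f_k,1,\dots,1)$ the cokernel is $A/(f_k)$, of rank $1$ rather than $m$, and for $d=\begin{pmatrix}y&z\\v&w\end{pmatrix}$ over $A=k[y,z,v,w]$ with $f_k=yw-zv$ the cokernel is not even locally free over $\bar{A}$. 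What is actually true (and is exactly Lemma~\ref{lem:equivalenceboudarycond}) is that these cokernels are pure weight one $A$-modules in $\Wt_A^{(f_k)}$; they carry no preferred free $\bar{A}$-structure, so $\Homo_0^k(\kos(\dd))$ is not a generalized Koszul cube over $\bar{A}$ in the sense of Definition~\ref{df:GKC}, the ``determinant'' of the induced boundary maps is undefined, and the inductive hypothesis cannot be invoked over the quotient ring. Your fallback ``free when $\bar{A}$ is regular along its support'' imports a regularity hypothesis that the corollary does not have, and the observation that the commutativity relations make the cokernels into a cube of $\bar{A}$-modules does not restore the freeness or the determinant data that the induction needs.

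The paper runs the induction differently, and the difference is exactly what avoids your obstacle: the inductive hypothesis is applied not to $\Homo_0^k(\kos(\dd))$ over $\bar{A}$ but to the $k$-faces of $\kos(\dd)$, which are honest generalized Koszul cubes over $A$ itself associated with the sub-$A$-sequence $\{f_j\}_{j\neq k}$, so freeness of vertices and the determinant conditions come for free. Admissibility of all faces is precisely the hypothesis of Proposition~\ref{prop:tothomology}, which yields $\Homo_1(\Homo_0^{(n]\ssm\{k\}}(\kos(\dd)))\isoto\Homo_1(\Tot^{(n]}(\kos(\dd)))$, and the right-hand side vanishes by Theorem~\ref{thm:resolcriterion} (Buchsbaum--Eisenbud). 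Together with admissibility of the faces this supplies every monomorphism condition in Definition~\ref{df:admcubes}: the conditions involving fewer than $n-1$ iterated homologies live inside some face, and the remaining one is the $\Homo_1$-vanishing above. To salvage your route you would have to build an $\bar{A}$-free theory of these cokernels from scratch, which is precisely what the paper's detour through the total complex avoids.
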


\begin{proof}[\bf Proof]
We prove by induction for $n$.
In the case for $n=1$, the assertion is trivial. 
For $n>1$, 
by inductive hypothesis, 
we notice that all $k$-faces of $\kos(\dd)$ is admissible for $k\in(n]$. 
For each $k$, 
then by \ref{prop:tothomology}, 
we have the isomorphism:
$$\Homo_1(\Homo_0^{(n]\ssm\{k\}}(\kos(\dd)))\isoto
\Homo_1(\Tot^{(n]}(\kos(\dd)))$$ 
and by \ref{thm:resolcriterion}, 
we turn out that the groups above is trivial. 
It means that $\kos(\dd)$ is admissible.
\end{proof}

\begin{df}[\bf Generalized Koszul resolutions]
\label{df:GKR}
Let $f_1,\cdots,f_n$ be an $A$-sequence.\\ 
A {\it Koszul} ($n$-){\it cube} associated with $f_1,\cdots,f_n$ is  
a $n$-cube $x$ in $\cM_A$ satisfying the following conditions:

\sn
(1) each vertex of $x$ is a finite free $A$-modules and 
their rank is constant. 
Therefore we can consider the determinant of its boundary maps. 

\sn 
(2) there are positive integers $m_1,\cdots,m_n$ and 
$\det d_S^s=f_s^{m_s}$ for each $S\in\cP_n$ and $s\in S$. 

\sn
A {\it generalized Koszul resolution} associated with 
$f_1,\cdots,f_n$ is the totalized complex of a Koszul cubes 
associated with $f_1,\cdots,f_n$. 
Let us denote the category of Koszul cubes 
(resp. generalized Koszul resolution) 
associated with 
$f_1,\cdots,f_n$ by $\Kos_A^{\ff_1^n}$ (resp. $\GKos_A^{\ff_1^n}$). 
\end{df}

\begin{nt}[\bf Rank and determinants of Koszul cubes]
\label{nt:rankanddeterminat}
For an $n$-Koszul cube $x$, 
we define {\it rank of $x$} by $\rank x:=\rank x_{\emptyset}$. 
We also define {\it $j$-th determinant of $x$} by $\det_j x:=\det d^j_{\{j\}}$ for any $j\in (n]$.   
\end{nt}

\begin{rem}
By \ref{thm:resolcriterion}, 
a generalized Koszul resolution is $0$-spherical 
and by \ref{cor:admcriterion}, 
Koszul cube is an admissible cubes.\\ 
As in the notation \ref{df:GKR}, 
we have the totalized functor
$$\Tot:\Kos_A^{\ff_1^n} \to \GKos_A^{\ff_1^n}$$
which is essentially surjective and faithful. 
\end{rem}

\begin{rem}[\bf Compatibility of the definition in \cite{Kos}]
\label{rem:comdef}
Now let us assume that 
$A$ is a noetherian ring such that 
every finite projective $A$-module is free. 
For example, 
the $n$-th valuable polynomial ring over a principle ideal domain or 
local ring and so on.\\ 
Then in \cite{Kos}, 
the notion of Koszul cubes is defined as follows:

\sn
A Koszul $n$-cube $x$ associated with an $A$-sequence $f_1,\cdots,f_n$  
is an admissible $n$-cube in $\cM_A$ which satisfies the following conditions.

\sn
(1) each $X_S$ is free $A$-modules for any $S\in\cP_n$.

\sn
(2) For each $i\in(n]$, the vertexes of $\Homo_0^k(x)$ are in $\Wt_A^{(f_i)}$. 

\sn
Utilizing the lemma~\ref{lem:equivalenceboudarycond} below, 
we turn out that the definition~\ref{df:GKR} 
and the definition above is equivalent 
if we assume that all $f_i$ is a prime element. 
\end{rem}

\begin{lem}
\label{lem:equivalenceboudarycond}
Let $f$ be a non-zero divisor. 
For an $A$-module homomorphism 
$$\psi:A^{\oplus n} \to A^{\oplus n}$$
Let us consider the following assertions.

\sn
{\rm (1)} $\det\psi=f^{\alpha} \times \unit$ for some $\alpha$.

\sn
{\rm (2)} $\coker \psi$ is in $\Wt_A^{(f)}$ and $\psi$ is injective.

\sn
The assertion {\rm (1)} implies the assertion {\rm (2)}. 
Moreover if we assume that $f$ is a prime element, 
then the assertion {\rm (2)} implies the assertion {\rm (1)}.
\end{lem}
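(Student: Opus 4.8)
\textbf{Proof proposal for Lemma~\ref{lem:equivalenceboudarycond}.}

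The plan is to handle the two implications separately, using $\kos^n$-style arguments and the Buchsbaum--Eisenbud criterion only indirectly; in fact the core of both directions is just the theory of $1\times 1$ matrices of determinants, i.e. the relation between $\det\psi$ and the annihilator/support of $\coker\psi$.

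First I would prove $(1)\Rightarrow(2)$. Assume $\det\psi = f^{\alpha}u$ with $u$ a unit. Since $f$ is a non-zero divisor, $\det\psi$ is a non-zero divisor, hence $\psi$ is injective (a square matrix over a commutative ring is injective iff its determinant is a non-zero divisor; one direction is Cramer's rule, the adjugate identity $\psi^{\mathrm{adj}}\psi = \det\psi\cdot\id$ forces any $v\in\Ker\psi$ to satisfy $\det\psi\cdot v = 0$, hence $v=0$). Thus $0\to A^{\oplus n}\xrightarrow{\psi} A^{\oplus n}\to\coker\psi\to 0$ is exact, so $\coker\psi$ has projective dimension $\leqq 1$. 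For the support: Cramer's rule gives $\det\psi\cdot\id = \psi\cdot\psi^{\mathrm{adj}}$, so $f^{\alpha}u$ annihilates $\coker\psi$, hence $f$ annihilates $\coker\psi$ and $\Supp(\coker\psi)\subset V(f)$. Therefore $\coker\psi\in\cMA^{(f)}(1) = \Wt_A^{(f)}$, which is exactly assertion $(2)$.

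Next I would prove $(2)\Rightarrow(1)$ under the extra hypothesis that $f$ is prime. Since $\psi$ is injective, $\det\psi$ is a non-zero divisor (otherwise some nonzero $v$ with $\det\psi\cdot v=0$ would, via the adjugate, produce an element of the kernel — more carefully: if $\det\psi$ is a zero-divisor then over the total ring of fractions $\psi$ fails to be injective, contradicting injectivity of $\psi$ over $A$). Now $\coker\psi\in\Wt_A^{(f)}$ means $f$ is nilpotent on $\coker\psi$, in fact (since $\coker\psi$ is finitely presented and supported on $V(f)$) a power $f^N$ annihilates $\coker\psi$; so $f^N\cdot\id$ factors through $\psi$, i.e. $f^N\id = \psi\theta$ for some $\theta$, whence $\det\psi\mid f^{Nn}$ in $A$. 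Because $f$ is prime, $f^{Nn}$ has, up to units, only the divisors $f^k$ for $0\leqq k\leqq Nn$; therefore $\det\psi = f^{\alpha}u$ for some $\alpha\geqq 0$ and some unit $u$. This is assertion $(1)$.

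The main obstacle is the divisibility bookkeeping in the second implication: turning "$\coker\psi$ supported on $V(f)$ with $\pd\leqq 1$" into the clean statement "$\det\psi \mid f^{N}$ for some $N$", and then invoking primality of $f$ to conclude $\det\psi$ is (up to a unit) a pure power of $f$. The subtlety is that a priori $f^N$ annihilates $\coker\psi$ only elementwise, so one needs the finite generation of $\coker\psi$ to get a uniform $N$, and one must be careful that the factorization $f^N\id = \psi\theta$ really does give $\det\psi\mid f^{Nn}$ (take determinants of both sides: $f^{Nn} = \det\psi\cdot\det\theta$). Once that is in place, primality of $f$ is what rules out any non-$f$-part of $\det\psi$ and also forces the exponent to be finite. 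The first implication $(1)\Rightarrow(2)$ is essentially formal from the adjugate identity and needs no hypothesis on $f$ beyond being a non-zero divisor.
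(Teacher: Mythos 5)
Your argument is correct and follows essentially the same route as the paper: the adjugate identity for $(1)\Rightarrow(2)$, and for $(2)\Rightarrow(1)$ the fact that $\det\psi$ divides a power of $f$ (the paper gets this by localizing, observing $\psi_f$ is an isomorphism so $\det\psi\in A_f^{\times}$; you get it from the factorization $f^N\id=\psi\theta$) combined with primality of $f$. One cosmetic slip: in the first implication it is $f^{\alpha}$, not $f$, that annihilates $\coker\psi$, but this still yields $\Supp(\coker\psi)\subset V(f^{\alpha})=V(f)$ as needed.
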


\begin{proof}[\bf Proof]
First we assume that the assertion {\rm (1)}. 
There is the cofactor morphism $\tilde{\psi}$ of $\psi$ such that 
$\tilde{\psi}\psi$ is multiplication by $f^{\alpha} \times \unit$. 
Hence $\psi$ is injective and 
$\coker\psi$ has projective dimension one. 
Since $\psi_f$ is an isomorphism, 
we turn out that $\coker\psi$ is supported on $V(f)$. 
Now we assume the assertion {\rm (2)} and that $f$ is a prime element, 
then we notice that $\psi_f$ is an isomorphism and 
it means that $\det \psi$ is in $A^{\times}_f$. 
Now we turn out that $\det\psi=f^{\alpha} \times \unit$ for some $\alpha$.
\end{proof}

\section{Weight two cases}

In this section, 
we assume that $A$ is noetherian commutative ring with unit 
and let $f$, $g$ be an $A$-sequence. 
First we give a characterization of weight two modules. 

\begin{thm}
\label{thm:main 1}
For any $M$ in $\cM_A$, 
the following conditions are equivalent.

\sn
{\rm (1)} $M$ is in $\Wt^{(f,g)}_A$. 

\sn
{\rm (2)} $M$ is resolved by a generalized Koszul resolution 
associated with $f$, $g$. 

\sn
{\rm (3)} There is a Koszul cube $x$ in $\Kos^{f,g}_A$ such that 
$\Homo_0(\Tot x)$ is isomorphic to $M$.
\end{thm}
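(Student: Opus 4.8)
The three implications I would establish are $(2)\Rightarrow(3)$ (essentially by definition, since a generalized Koszul resolution is by construction $\Tot x$ for a Koszul cube $x$, and $\Homo_0$ of a $0$-spherical resolution of $M$ is $M$), then $(3)\Rightarrow(1)$, then the crucial $(1)\Rightarrow(2)$. For $(3)\Rightarrow(1)$: given a Koszul $2$-cube $x$ associated with $f,g$, by Remark following \ref{df:GKR} it is admissible and $\Tot x$ is $0$-spherical, so $\Tot x$ is a length-two free resolution of $M:=\Homo_0(\Tot x)$; hence $\pd_A M \leqq 2$. For the support statement, after inverting $f$ the cube $x$ has all boundary maps with determinant a unit (since $\det d^1 = f^{m_1}$), so $\Homo_0^1(x)_f = 0$; similarly $\Homo_0^2(x)_f = 0$, and one checks $(\Tot x)_f$ is exact, so $M_f = 0$; symmetrically $M_g = 0$. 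Thus $M$ is supported on $V(f,g)$, giving $M \in \Wt_A^{(f,g)} = \cMA^{(f,g)}(2)$.

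The heart of the matter is $(1)\Rightarrow(2)$: starting from $M$ supported on $V(f,g)$ with $\pd_A M \leqq 2$, I must produce a Koszul $2$-cube $x$ with $\Homo_0(\Tot x)\cong M$. The plan is to build the cube by first resolving in the $g$-direction and then in the $f$-direction. Concretely: since $M$ is supported on $V(f,g)$, it is killed by a power of $g$ (using that $A$ is noetherian and $M$ is finite, together with $\sqrt{(f,g)}\supseteq \mathrm{Ann}\,M$ after localizing appropriately), so $M$ is, after a preliminary step, a module on which $g$ acts nilpotently; more to the point I want to realize $M$ as the cokernel of a square matrix with determinant a power of $g$ times a power of $f$ in the appropriate places. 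The mechanism is Lemma~\ref{lem:equivalenceboudarycond} run in reverse: if I can exhibit $M$ as $\coker\psi$ for an injective square matrix $\psi$ over $A$ with $\det\psi$ a unit times $f^{a}g^{b}$, that already looks like the total map of a degenerate cube, but I need genuinely the cube structure, i.e. a commutative square of free modules whose four boundary maps have determinants $f^{m_1}, f^{m_1}, g^{m_2}, g^{m_2}$.

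So the real construction goes as follows. Since $M$ is supported on $V(f,g)$ with projective dimension $\leqq 2$, consider $N := \Homo_0$ of the cone on "multiplication pattern", or more cleanly: take a minimal free resolution $0\to F_2\xrightarrow{\phi_2} F_1\xrightarrow{\phi_1}F_0\to M\to 0$ (truncate if $\pd < 2$ by padding). The key claim to prove is that one can choose this resolution, or modify it, so that it is the total complex of a $2$-cube with free vertices whose boundary maps have determinants equal to powers of $f$ (in one direction) and powers of $g$ (in the other). To get there I would use that $M_f$ is supported on $V(g)$ over $A_f$ — so over $A_f$, $M_f$ has a length-one free resolution with connecting map of determinant a power of $g$ (Lemma~\ref{lem:equivalenceboudarycond}(2)$\Rightarrow$(1) applied over $A_f$, assuming $g$ prime there, or more robustly via the structure of modules of finite length and finite projective dimension over $A_f$ being pure weight one supported on $g$) — and symmetrically with $f,g$ swapped; then glue these two "edge resolutions" into a commutative square using the fact that $\Homo_0^1$ of the desired cube should be the $g$-resolution of $\Homo_0^2$(cube) and vice versa, invoking \ref{lem:coincided} and \ref{prop:tothomology} to check $0$-sphericality of the total complex.

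**Main obstacle.** The hard part is the gluing step in $(1)\Rightarrow(2)$: producing an honest commutative square of free modules (not merely a two-step complex) whose boundary maps individually have the prescribed determinants $f^{m_1}$ and $g^{m_2}$, and simultaneously verifying $\Homo_0$ of its total complex is $M$. The length-two resolution of $M$ gives the diagonal/total data, but decomposing $\phi_1$ and $\phi_2$ compatibly into the four edges of a square — i.e. lifting the "$M$ is an extension of a weight-one-supported-on-$f$ module by a weight-one-supported-on-$g$ module" structure to the level of free presentations with controlled determinants — is where the genuine work lies; the determinant conditions on all four edges are a nontrivial constraint, and handling them presumably requires the hypothesis that $f,g$ is an $A$-sequence (so that $\kos(f,g)$ is admissible and the homology-iteration formalism of Section~6 applies) together with a determinant-tracking argument along the lines of Lemma~\ref{lem:equivalenceboudarycond}. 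I would expect to spend most of the proof there, with the other two implications being short.
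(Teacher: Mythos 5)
Your outer structure is fine: the cycle $(1)\Rightarrow(2)\Rightarrow(3)\Rightarrow(1)$ is a legitimate reorganization of the paper's (which proves $(1)\Rightarrow(3)$ and treats $(3)\Rightarrow(2)$, $(2)\Rightarrow(1)$ as immediate consequences of Theorem~\ref{thm:resolcriterion} and the determinant/localization observation you make). But for the crucial implication $(1)\Rightarrow(2)$ you have not given a proof: you explicitly leave the ``gluing step'' --- producing a commutative square of free modules whose four boundary maps have determinants $f^{m_1},f^{m_1},g^{m_2},g^{m_2}$ with $\Homo_0(\Tot x)\cong M$ --- as an open obstacle, and that step is the entire content of the theorem. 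Worse, the route you sketch toward it would not work as stated: since $M$ is supported on $V(f,g)$, localizing at $f$ or at $g$ kills $M$ outright ($M_f=M_g=0$), so the proposed ``edge resolutions over $A_f$ and $A_g$ with connecting maps of determinant a power of $g$ (resp.\ $f$)'' carry no information; and the structure you want to lift, ``$M$ is an extension of a weight-one module supported on $(f)$ by a weight-one module supported on $(g)$,'' does not exist for a pure weight two module (any nonzero sub or quotient of $M$ is still supported on $V(f,g)$, hence has codimension-two support and cannot be of weight one --- already $M=A/(f,g)$ is a counterexample).

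The paper's actual construction is quite different and is worth recording against your plan. After replacing $f$, $g$ by powers one may assume $fM=gM=0$; choose a surjection $(A/f)^{\oplus n}\rdef M$ with kernel $L$, which lies in $\Wt_A^{(f)}$ (note: both $L$ and $(A/f)^{\oplus n}$ are supported on $(f)$, not split between $(f)$ and $(g)$). Present $L$ by a square matrix $P:A^{\oplus m}\to A^{\oplus m}$ and lift $L\rinc (A/f)^{\oplus n}$ to $U:A^{\oplus m}\to A^{\oplus n}$. One shows $\det P=\unit\times f^n$ by localizing at $f$ (where $L_f=0$) and at $g$ (where $L_g\cong (A_g/f)^{\oplus n}$ and a minimality/split quasi-isomorphism argument applies), then intersecting: $A_f^{\times}\cap A_g^{\times}=A^{\times}$. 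Since $gM=0$, each $g\ee_k$ lies in $L$ and lifts to $\xx_k\in A^{\oplus m}$ with $U\xx_k=g\ee_k+f\vv_k$, giving matrices $X$, $V$ with $UX=gE_n+fV$; the block matrix $\bar U=\begin{pmatrix} fV & U\\ X & E_m\end{pmatrix}$ then has determinant $(-g)^n$, and the square with verticals $\begin{pmatrix} E_n & 0\\ 0 & P\end{pmatrix}$ and $\begin{pmatrix} fE_n & 0\\ 0 & E_m\end{pmatrix}$ and bottom $\bar U$ is the desired Koszul cube. This explicit block-matrix trick is exactly the missing idea in your proposal; without it (or a substitute of comparable substance) the implication $(1)\Rightarrow(2)$ remains unproved.
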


Obviously the assertion that (2) implies (1) and that (3) implies (2) 
are trivial. 

\begin{proof}[\bf Proof] 
Let us fix an $A$-module $M$ in $\Wt_A^{(f,g)}$.
By replacing $f$ with $f^{\alpha}$ for some $\alpha$ and so on, 
without loss of generality, 
we may assume that $fM=gM=0$. 
Then now 
we have a surjection $(A/f)^{\oplus n} \to M$ with kernel $L$. 
Since $\Wt_A^{(f)}$ is closed under taking kernels of surjections, 
$L$ is in $\Wt_A^{(f)}$. 
By considering resolutions of $L$ and $(A/f)^{\oplus n}$, 
we get the following diagram. 
$$\xymatrix{
A^{\oplus m} \ar[r] \ar[d]_{P} & A^{\oplus n} \ar[d]^f\\ 
A^{\oplus m} \ar[r]_{U} & A^{\oplus n}
}$$ 

\sn
\textbf{Claim} $\det P=$ unit$\times f^n$. 

\begin{proof}[\bf Proof of \textbf{Claim}]
First Localize at $f$ for the sequence below 
$$A^{\oplus m} \overset{P}{\to} A^{\oplus m} \to L \to 0,$$ 
we notice that $\frac{\det p}{f^n}$ is in $A_f^{\times}$. 
Next localize at $g$ for the sequence above again, 
we get the following diagram:
$$\xymatrix{
A^{\oplus n}_{g} \ar[r]^f \ar[d] & A^{\oplus n}_{g} \ar[r] \ar[d] & 
(A_{g}/f)^{\oplus n} \ar[d]^{\wr}\\
A^{\oplus m}_{g} \ar[r]_{P_{g}} & A^{\oplus m}_{g} \ar[r] & L_{g}. 
}$$
Now taking any prime ideal $\pp$ in $\Spec A_g$ 
and localize the diagram above at $\pp$. 
Then since the top line is a minimal resolution, 
the vertical morphism as 
a complex is a split quasi-isomorphism. 
Therefore we turn out that 
$\frac{\det P}{f^n}$ is in ${(A_g)}_{\pp}^{\times}$ 
for any prime ideal $\pp$ in $A$ and hence 
we notice that $\frac{\det P}{f^n}$ is in $A^{\times}_g$. 
Since $f$, $g$ forms $A$-sequence, 
we have the equality $A^{\times}_f \cap A^{\times}_g =A^{\times}$ 
in the quotient ring of $A$. 
Therefore we obtain the result.
\end{proof}

Now to get a Koszul cube, we are arranging the square above. 

\sn
\textbf{Claim 2} There are $n\times m$ matrix $X$ and $n\times n$ matrix $V$ 
such that $UX=gE_n+fV$ where $E_n$ is the $n$-th unit matrix. 

\begin{proof}[\bf Proof of \textbf{Claim 2}]
For each $k\in (n]$, 
let us denote one of a pull back 
of $g\ee_k$ in ${(A/f)}^{\oplus n}$ by the maps 
$$A^{\oplus m} \rdef L  \rinc {(A/f)}^{\oplus n}$$
by $\xx_k= 
\begin{pmatrix}
x_{k1}\\
\vdots\\
x_{km}
\end{pmatrix}
 \in A^{\oplus m}$. 
Then there is a vector 
$\vv_k=
\begin{pmatrix}
v_{k1}\\
\vdots\\
v_{kn}
\end{pmatrix}
\in A^{\oplus n}$ such that 
$U\xx_k=g\ee_k+f\vv_k$ 
where $\ee_k$ is the $k$-th unit vector in $A^{\oplus n}$. 
We put $X=
\begin{pmatrix}
\xx_1 &\cdots & \xx_n
\end{pmatrix}$ 
and 
$V=
\begin{pmatrix}
\vv_1 &\cdots & \vv_n
\end{pmatrix}
$.
\end{proof}

Put the matrix $\bar{U}$ as follows:
$$\bar{U}=
\begin{pmatrix}
fV & U \\
X & E_m
\end{pmatrix}
$$
where $E_m$ is the $m$-th elementary matrix. 
Since we have 
$$
\begin{pmatrix}
-gE_n & 0\\
0 & E_m
\end{pmatrix}
=
\begin{pmatrix}
E_n & -U\\
0 & E_m
\end{pmatrix}
\bar{U}
\begin{pmatrix}
E_n & 0\\
-X & E_m
\end{pmatrix}
$$
it follows that $\det \bar{U}={(-g)}^n$. 
Now the following diagram is desired $x$: 
$$\xymatrix{
A^{\oplus m+n} \ar@{-->}[r] \ar[d]_{
\begin{pmatrix}
E_n & 0\\
0 & P
\end{pmatrix}
} 
& A^{\oplus m+n} \ar[d]^{
\begin{pmatrix}
fE_n & 0\\
0 & E_m
\end{pmatrix}
}\\ 
A^{\oplus m+n} \ar[r]_{\bar{U}} & A^{\oplus m+n}
}$$
where 
dotted map is induced from the commutative diagram below:
$$\xymatrix{
A^{\oplus m+n} \ar[r]^{\bar{U}} \ar@{->>}[d] & A^{\oplus m+n} \ar@{->>}[d]\\ 
L \ar[r] & {(A/f)}^{\oplus n} .
}$$
\end{proof}

\begin{cor}
\label{cor:strwt2}
For any $M$ in $\Wt^{(f,g)}_A$, 
there are endomorphisms $P$, $Q:A^{\oplus n} \to A^{\oplus n}$ 
such that $M$ is isomorphic to $\frac{A^{\oplus n}}{<\Im P,\Im Q>}$ 
and $P$ and $Q$ are similar to the following matrixes:
$$P \sim
\begin{pmatrix}
fE_m & 0\\
0 & E_{n-m}
\end{pmatrix} 
$$
$$Q \sim
\begin{pmatrix}
gE_m & 0\\
0 & E_{n-m}
\end{pmatrix} 
$$
where $f$, $g$ forms a regular sequence. 
\end{cor}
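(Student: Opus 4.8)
The plan is to extract $P$ and $Q$ directly from the Koszul $2$-cube produced in the proof of Theorem~\ref{thm:main 1}. First I would record the shape of $\Homo_0$ of a $2$-cube: for a $2$-cube $x$ (a commutative square) the total complex $\Tot x$ is concentrated in degrees $0,1,2$, with $\Tot(x)_0=x_{\emptyset}$ and degree-$1$ differential given, up to sign, by the map $x_{\{1\}}\oplus x_{\{2\}}\to x_{\emptyset}$ assembled from the two boundary morphisms $d^1_{\{1\}}$ and $d^2_{\{2\}}$ landing in $x_{\emptyset}$; hence
$$\Homo_0(\Tot x)=x_{\emptyset}\big/\bigl(\Im d^1_{\{1\}}+\Im d^2_{\{2\}}\bigr).$$
Next, beginning from $M\in\Wt^{(f,g)}_A$, I would invoke Theorem~\ref{thm:main 1} in the form furnished by its proof: after replacing $f,g$ by suitable powers (which keeps $f,g$ a regular sequence, since powers of a regular sequence again form one), that proof constructs a Koszul $2$-cube $x$ with $x_{\emptyset}=A^{\oplus(n+m)}$, with $\Homo_0(\Tot x)\cong M$, and whose two boundary morphisms into $x_{\emptyset}$ are exactly the matrices $\bar{U}$ and $\begin{pmatrix}fE_n&0\\0&E_m\end{pmatrix}$ appearing there. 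Combining these two points yields
$$M\cong A^{\oplus(n+m)}\Big/\Big\langle\,\Im\bar{U},\ \Im\begin{pmatrix}fE_n&0\\0&E_m\end{pmatrix}\,\Big\rangle.$$

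I would then set $P:=\begin{pmatrix}fE_n&0\\0&E_m\end{pmatrix}$ and $Q:=\bar{U}$, regarded as endomorphisms of $A^{\oplus(n+m)}$, so that $M\cong A^{\oplus(n+m)}/\langle\Im P,\Im Q\rangle$ — the presentation asked for, with the ``$n$'' of the statement equal to $n+m$ and its ``$m$'' equal to $n$. The matrix $P$ is visibly already in the claimed diagonal form, so all that remains is to normalise $Q=\bar{U}$. For this I would invoke the identity
$$\begin{pmatrix}-gE_n&0\\0&E_m\end{pmatrix}=\begin{pmatrix}E_n&-U\\0&E_m\end{pmatrix}\,\bar{U}\,\begin{pmatrix}E_n&0\\-X&E_m\end{pmatrix}$$
established in the proof of Theorem~\ref{thm:main 1}; inverting the two unitriangular outer factors and splitting off the sign matrix $\begin{pmatrix}-E_n&0\\0&E_m\end{pmatrix}$ rewrites $\bar{U}$ as $S\begin{pmatrix}gE_n&0\\0&E_m\end{pmatrix}T$ with $S,T\in\mathrm{GL}_{n+m}(A)$. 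Hence $Q$ is similar to $\begin{pmatrix}gE_n&0\\0&E_m\end{pmatrix}$, where ``similar'' is read as equivalence under invertible row and column operations; this is the notion that matters here, because such operations change $\Im P$ and $\Im Q$ only by an automorphism of the ambient free module (and an irrelevant automorphism of the source), hence leave the isomorphism class of $A^{\oplus(n+m)}/\langle\Im P,\Im Q\rangle$ unchanged.

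I do not expect a genuine obstacle: the argument is essentially bookkeeping layered on top of Theorem~\ref{thm:main 1}. The points that need a little care are the translation between the notation of the statement and that of the proof of Theorem~\ref{thm:main 1}, the verification that passing to the powers $f^{\alpha},g^{\beta}$ preserves the regular-sequence hypothesis, and the observation that the sign in $-g$ is absorbed by the unit matrix $\begin{pmatrix}-E_n&0\\0&E_m\end{pmatrix}$. The one substantive remark is that ``similar'' here must be understood as two-sided $\mathrm{GL}$-equivalence rather than conjugacy — conjugacy would be genuinely stronger and need not hold — so I would flag this reading explicitly.
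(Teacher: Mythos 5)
Your proposal is correct and follows exactly the route the paper intends: the corollary is stated without its own proof, and the only available derivation is to read off $P=\bigl(\begin{smallmatrix}fE_n&0\\0&E_m\end{smallmatrix}\bigr)$ and $Q=\bar{U}$ as the two boundary maps into $x_{\emptyset}$ of the Koszul $2$-cube built in the proof of Theorem~\ref{thm:main 1}, then diagonalise $\bar{U}$ via the displayed unitriangular identity -- which is precisely what you do. Your caveat that ``similar'' can only mean two-sided $\mathrm{GL}$-equivalence (the identity produces two different unitriangular factors, not a conjugation) is a fair and worthwhile observation about the statement itself, not a gap in your argument.
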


\end{document}